\newcommand{\blue}[1]{\textcolor{blue}{#1}}     
\let\c@table\c@figure
\newtheorem*{maintheorem}{Main Theorem}
\newtheorem{theorem}{Theorem}[section]
\newtheorem{proposition}[theorem]{Proposition}
\newtheorem{lemma}[theorem]{Lemma}
\newtheorem{corollary}[theorem]{Corollary}
\theoremstyle{definition}
\newtheorem{definition}[theorem]{Definition}
\theoremstyle{remark}
\newtheorem{remark}[theorem]{Remark}
\newcommand{\newword}[1]{\textbf{#1}}
\newcommand{\CLF}{\mathrm{CLF}}
\newcommand{\e}{\mathfrak{e}}
\newcommand{\f}{\mathfrak{f}}
\newcommand{\g}{\mathfrak{g}}
\newcommand{\h}{\mathfrak{h}}
\newcommand{\s}{\mathfrak{s}}
\renewcommand{\t}{\mathfrak{t}}
\renewcommand{\r}{\mathfrak{r}}
\renewcommand{\k}{\mathfrak{k}}
\newcommand{\cd}{\mathrm{cd}}
\begin{document}

\title[Conjugator Length in Thompson's Groups]{Conjugator Length in Thompson's Groups}
\author{James Belk}
\address{School of Mathematics \& Statistics, University of Glasgow, Glasgow G12 8QQ, United Kingdom.}
\email{\href{mailto:jim.belk@glasgow.ac.uk}{jim.belk@glasgow.ac.uk}}

\thanks{The first author has been partially supported by EPSRC grant EP/R032866/1 as well as the National Science Foundation under Grant No.~DMS-1854367 during the creation of this paper.}

\author{Francesco Matucci}
\address{Dipartimento di Matematica e Applicazioni, Universit\`{a} degli Studi di Milano--Bicocca, Milan 20125, Italy.}
\email{\href{mailto:francesco.matucci@unimib.it}{francesco.matucci@unimib.it}}
\thanks{The second author is a member of the Gruppo Nazionale per le Strutture Algebriche, Geometriche e le loro Applicazioni (GNSAGA) of the Istituto Nazionale di Alta Matematica (INdAM) and gratefully acknowledges the support of the 
Funda\c{c}\~ao para a Ci\^encia e a Tecnologia  (CEMAT-Ci\^encias FCT projects UIDB/04621/2020 and UIDP/04621/2020) and of the Universit\`a degli Studi di Milano--Bicocca
(FA project ATE-2017-0035 ``Strutture Algebriche'').
}

\begin{abstract}
We prove Thompson's group~$F$ has quadratic conjugator length function.  That is, for any two conjugate elements of~$F$ of length~$n$ or less, there exists an element of $F$ of length $O(n^2)$ that conjugates one to the other.  Moreover, there exist conjugate pairs of elements of $F$ of length at most $n$ such that the shortest conjugator between them has length~$\Omega(n^2)$. This latter statement holds for $T$ and $V$ as well.
\end{abstract}

\maketitle

Let $G$ be a group with finite generating set~$S$, and let $\ell\colon G\to\mathbb{N}$ be the word length function on $G$ with respect to~$S$.  If $g$ and $h$ are conjugate elements of $G$, the \newword{conjugator distance} from $g$ to~$h$ is
\[
\cd(g,h) = \min\{\ell(k) \mid k\in G\text{ and }h=k^{-1}gk\}.
\]
The \newword{conjugator length function} for~$G$ (with respect to~$S$) is the nondecreasing function $\mathrm{CLF}\colon\mathbb{N}\to\mathbb{N}$ defined by
\[
\CLF(n) = \max\{\cd(g,h) \mid g,h\in G\text{ are conjugate and }\ell(g)+\ell(h)\leq n\}.
\]
This definition first appeared in Andrew Sale's doctoral dissertation~\cite{Sale}, where it is credited to Tim Riley. Though this definition of $\CLF$ depends on the generating set, if $\CLF$ and $\CLF'$ are conjugator length functions corresponding to two different finite generating sets for~$G$ then there exists a constant $k>0$ so that
\[
\CLF'(n) \leq k\,\CLF\bigl(\lfloor kn\rfloor \bigr)
\]
for all $n$.  In particular, if $\CLF$ has polynomial growth then the degree of the polynomial is independent of the generating set.  By similar reasoning, the degree of polynomial growth of $\CLF$ is a quasi-isometry invariant for finitely generated groups.

The function $\CLF$ can be viewed as measuring the difficulty of the conjugacy problem in~$G$.  If $G$ has solvable word problem, then the conjugacy problem is solvable in $G$ if and only if $\CLF$ is a computable function, or equivalently if and only if $\CLF$ has a computable upper bound.  
The conjugator length function has been
estimated for many classes of groups. It has
been shown to be linear in hyperbolic groups
\cite{Lysenok}, mapping class groups \cite{Behrstock-Drutu,Masur-Minsky,Tao} and some metabelian groups
~\cite{Sale1,Sale2}, including lamplighter groups $\mathbb{Z}_q\wr \mathbb{Z}$ and solvable Baumslag-Solitar groups. It is
at most quadratic in fundamental groups of prime 3-manifolds~\cite{Sale1}, at most cubic in free solvable groups~\cite{Sale3}, and
at most exponential in CAT(0)-groups~\cite{Bridson-Haefliger}
and in certain semidirect products
$\mathbb{Z}^d \rtimes \mathbb{Z}^k$~\cite{Sale1}. 
Sale also gives examples in~\cite{Sale3} of wreath products whose conjugator length functions have quadratic lower bounds.
In upcoming work~\cite{BRS}, Bridson, Riley,
and Sale give examples of finitely presented groups whose conjugator length function is polynomial of arbitrary degree, as well an example of a finitely presented group whose conjugator length function grows like~$2^n$.

Thompson's group $F$ is the group defined by the presentation
\[
\langle x_0,x_1,x_2. \ldots \mid x_nx_k=x_kx_{n+1}\text{ for }n>k\rangle.
\]
This is one of three groups introduced by Richard J.~Thompson in the 1960's, which have since become important examples in geometric group theory.  See \cite{CFP} for a general introduction to Thompson's groups.  Since $x_n=x_0^{1-n}x_1x_0^{n-1}$ for all $n\geq 2$, Thompson's group $F$ is generated by the elements $\{x_0,x_1\}$.  In fact there is a presentation for $F$ with these generators and two relations (see~\cite{CFP}).

 Our main theorem is the following.

\begin{maintheorem}
The conjugator length function for Thompson's group $F$ has quadratic growth.  That is, there exist constants $0<a<b$ so that
\[
an^2 \leq \CLF(n) \leq bn^2
\]
for all sufficiently large~$n$.
\end{maintheorem}

The conjugacy problem for $F$ was first solved by V.~Guba and M.~Sapir as a special case of the solution for diagram groups~\cite{GuSa}.  In~\cite{BeMa}, the authors gave a different description of this solution using the language of strand diagrams, and in~\cite{BHMM1,BHMM2} it was shown that the solution to the conjugacy problem could be implemented in linear time.  All of our work here is phrased using strand diagrams, but our proof of the upper bound in Section~2 essentially follows Guba and Sapir's proof \mbox{\cite[Theorem~15.23]{GuSa}} while keeping track of the lengths of the conjugators.

We prove the lower bound by exhibiting a sequence of pairs of conjugate elements $(f_n,g_n)$ whose lengths grow linearly with $n$ but whose conjugator distance grows quadratically.  The main idea is that the ``area'' of a conjugating strand diagram can be forced to be much larger than areas of the strand diagrams for the two conjugate elements, as shown in Figure~\ref{fig:BigPicture}.  The elements we choose have cyclic centralizers, which makes it easy to compute an explicit lower bound for the conjugator distance.

All of the arguments in this paper can be modified to work for the generalized Thompson groups $F_n$ (see~\cite{Bro}), and more generally for diagram groups over finite presentations of finite semigroups (see~\cite{GuSa}).

We prove that a quadratic lower bound holds for $T$ and $V$ as well (see Theorem~\ref{lower}), and we would conjecture that a quadratic upper bound holds for $T$ as well using a modified version of the arguments in Section~2. In Thompson's group $V$ the word length is not comparable to the complexity of a strand diagram (see~Remark~\ref{rem:NotV}), so
the methods in Section~2 cannot be modified to obtain an upper bound better than $\CLF(n) \leq C (n\log n)^2$ for some constant~$C$.

This paper is organized as follows.  In Section~1 we establish a linear relationship between the word length of an element and the number of nodes in the corresponding strand diagram.  In Section~2 we prove a quadratic upper bound for the conjugator length function using the known solution to the conjugacy problem in~$F$.  Finally, in Section~3 we prove a quadratic lower bound for the conjugator length function by exhibiting the aforementioned sequence of pairs~$(f_n,g_n)$ and analyzing their length and conjugator distance.

\subsection*{Acknowledgements}
The authors would like to thank Timothy Riley and Andrew Sale for drawing our attention to the question addressed in this paper, and for suggesting the inclusion of Theorem~\ref{lower}.  We would also like to thank Collin Bleak as well as an anonymous referee for several helpful suggestions and comments.

\section{Strand Diagrams}

Here we briefly recall the definition of strand diagrams for Thompson's group~$F$ and the associated solution to the conjugacy problem given in~\cite{BeMa}.

\begin{figure}
\centering
\includegraphics{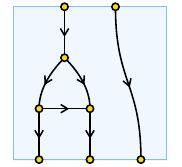}
\caption{A $(2,3)$-strand diagram.  Here $[0,1]\times[0,1]$ is shown as a blue square, but usually the square is not explicitly shown.}
\label{fig:FStrandDiagram32}
\end{figure}
A \newword{strand diagram} (see Figure~\ref{fig:FStrandDiagram32}) is a finite acyclic digraph embedded in the unit square $[0,1]\times[0,1]$, with the following properties:
\begin{enumerate}
\item The graph has finitely many univalent sources along the top edge of the square, and finitely many univalent sinks along the bottom edge of the square.\smallskip
\item Every other vertex is trivalent, and is either a \newword{split} (with one incoming edge and two outgoing edges) or a \newword{merge} (with two incoming edges and one outgoing edge).
\end{enumerate}
By convention, isotopic strand diagrams are considered equal.  A strand diagram $\f$ with $i$ sources and $j$ sinks will be referred to as an \newword{$\boldsymbol{(i,j)}$-strand diagram}.

\begin{figure}
\centering
\includegraphics{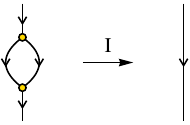}
\qquad\qquad\qquad\qquad
\includegraphics{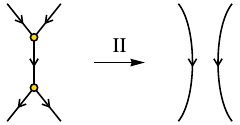}
\caption{Reductions of types I and II for strand diagrams (picture taken from~\cite{BeMa}).}
\label{fig:Reductions}
\end{figure}
A \newword{reduction} of a strand diagram is either of the two moves shown in Figure~\ref{fig:Reductions}.  A strand diagram is \newword{reduced} if it is not subject to any reductions.  Two strand diagrams are \newword{equivalent} if one can be obtained from the other by a sequence of reductions and inverse reductions. It is easy to show that every strand diagram is equivalent to a unique reduced strand diagram.

\begin{figure}
    \centering
    $\begin{array}{cc}
         \f & \raisebox{-0.47\height}{\includegraphics{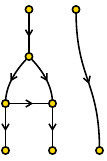}} \\[0.55in]
         \g & \raisebox{-0.47\height}{\includegraphics{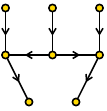}}
    \end{array}\qquad\qquad
    \underset{\textstyle\f\cdot\g\rule{0pt}{2.2ex}}{\raisebox{-0.47\height}{\includegraphics{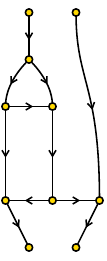}}}\qquad\qquad
    \underset{\textstyle\f\g\rule{0pt}{2.2ex}}{\raisebox{-0.47\height}{\includegraphics{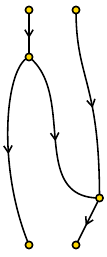}}}$
    \caption{Two strand diagrams $\f$ and $\g$, their concatenation~$\f\cdot\g$, and their product~$\f\g$.  In this case, $\f\g$ is obtained from $\f\cdot\g$ by two reductions, with the first of type~II and the second of type~I.}
\label{fig:ProductStrandDiagrams}
\end{figure}
If $\f$ is an $(i,j)$-strand diagram and $\g$ is a $(j,k)$-strand diagram, the \newword{concatenation} $\f\cdot\g$ is the strand diagram obtained by gluing the sinks of $\f$ to the sources of~$\g$ and then removing the resulting bivalent vertices~(see Figure~\ref{fig:ProductStrandDiagrams}). The \newword{inverse} $\f^{-1}$ of an $(i,j)$-strand diagram $\f$ is the $(j,i)$-strand diagram obtained by reflecting $\f$ along a horizontal line.  Note that the concatenations $\f\cdot\f^{-1}$ and $\f^{-1}\cdot\f$ can both be reduced to \newword{trivial} strand diagrams, i.e.\ strand diagrams that have no splits or merges.

If $\f$ is a reduced $(i,j)$-strand diagram and $\g$ is a reduced $(j,k)$-strand diagram, the \newword{product} $\f\g$ is the reduced strand diagram obtained by reducing the concatenation~\mbox{$\f\cdot\g$} (see Figure~\ref{fig:ProductStrandDiagrams}).  Under this product operation, the set of all reduced strand diagrams forms a groupoid (i.e.~category with inverses) whose objects are the positive integers and whose morphisms are reduced strand diagrams.  

\begin{figure}
\centering
\includegraphics{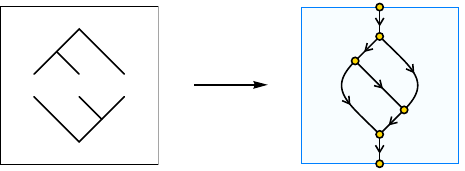}
\caption{Constructing a $(1,1)$-strand diagram from a tree-pair diagram (picture taken from~\cite{BeMa}).}
\label{fig:TreeToStrand}
\end{figure}
For the purposes of this paper, \newword{Thompson's group $\boldsymbol{F}$} will be viewed as the group of all reduced $(1,1)$-strand diagrams.  We will use Roman letters ($f$ and $g$) instead of Fraktur letters ($\f$ and~$\g$) when referring to elements of~$F$.  As a group, $F$ is generated by the two elements $x_0$ and $x_1$ shown in Figure~\ref{fig:GeneratorStrandDiagrams}.
\begin{figure}
    \centering
    $\underset{\textstyle\rule{0pt}{10pt} x_0}{\includegraphics{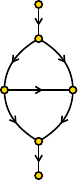}}
    \qquad\qquad\qquad
    \underset{\textstyle\rule{0pt}{10pt} x_1}{\includegraphics{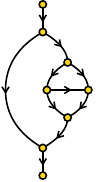}}$
    \caption{Strand diagrams for the generators $\{x_0,x_1\}$ of~$F$.}
    \label{fig:GeneratorStrandDiagrams}
\end{figure}%
This description of~$F$ using strand diagrams is closely related to the usual description of $F$ using tree-pair diagrams as given in~\cite{CFP}.  Specifically, given any reduced tree-pair diagram for an element of $F$, we can construct the corresponding reduced $(1,1)$-strand diagram by gluing together the leaves of the two trees, as shown in Figure~\ref{fig:TreeToStrand}.

We will need a few more definitions involving strand diagrams that do not appear in~\cite{BeMa}.

\begin{definition}\quad
\begin{enumerate}
    \item If $\f$ and $\g$ are reduced strand diagrams for which the product $\f\g$ is defined, then there exist unique reduced strand diagrams $\f'$, $\g'$, and $\h$ so that
    \[
    \f= \f'\cdot\h,\qquad \g=\h^{-1}\cdot\g'\qquad\text{and}\qquad \f\g\ = \f'\cdot \g'.
    \]
    In this case, we say that the product $\f\g$ is obtained by \newword{canceling}~$\h$.\smallskip
    \item If $\f$ is an $(i,j)$-strand diagram and $\g$ is an $(i',j')$-strand diagram, we let $\f\oplus\g$ denote the $(i+i',j+j')$-strand diagram obtained by placing $\g$ to the right of~$\f$.\smallskip
    \item For each positive integer $k$, the  \newword{right vine} with $k$ leaves is the $(1,k)$-strand diagram $\t_k$ shown in Figure~\ref{fig:RightVine}.  If $\f$ is any reduced $(i,j)$-strand diagram, then the product $\t_i\f\t_j^{-1}$ is an element of~$F$.
\end{enumerate}
\end{definition}
\begin{figure}
    \centering
    \includegraphics{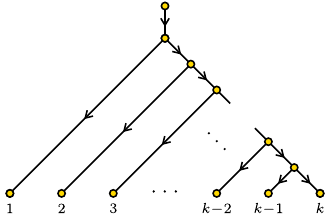}
    \caption{The right vine with $k$ leaves, denoted~$\t_k$.}
    \label{fig:RightVine}
\end{figure}

Finally we recall from \cite{BeMa} the solution to the conjugacy problem in $F$ using strand diagrams, which was based on the solution to the conjugacy problem given by Guba and Sapir~\cite{GuSa}.  An \newword{annular strand diagram} is a finite digraph embedded in the annulus $[0,1]\times S^1$, with the following properties:
\begin{enumerate}
\item Every vertex is either a split or a merge.\smallskip
\item Every directed cycle winds counterclockwise around the central hole.\smallskip
\item Some edges may be \newword{free loops} without any vertices, which must wind counterclockwise around the central hole.
\end{enumerate}
As with strand diagrams, isotopic annular strand diagrams are considered equal.
\begin{figure}
\centering
\includegraphics{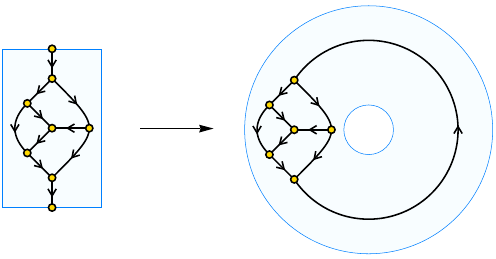}
\caption{Closing a strand diagram to obtain an annular strand diagram (picture taken from~\cite{BeMa}).}
\label{fig:ClosingStrandDiagram}
\end{figure}%
If~$\f$ is any $(i,i)$-strand diagram, its \newword{closure} is the annular strand diagram obtained by gluing its sources and sinks together and removing the resulting bivalent vertices, as shown in Figure~\ref{fig:ClosingStrandDiagram}.

\begin{figure}
    \centering
    \fbox{$\;\;
    \raisebox{-0.47\height}{\includegraphics{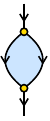}}
    \;\;\overset{\textstyle\text{I}}{\xrightarrow{\quad}}\;\;
    \raisebox{-0.47\height}{\includegraphics{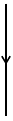}}
    \;\;\;$}
    \hfill
    \fbox{$\;\;
    \raisebox{-0.47\height}{\includegraphics{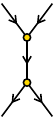}}
    \;\;\overset{\textstyle\text{II}}{\xrightarrow{\quad}}\;\;
    \raisebox{-0.47\height}{\includegraphics{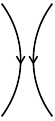}}
    \;\;$}
    \hfill
    \fbox{$
    \raisebox{-0.47\height}{\includegraphics{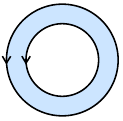}}
    \;\;\overset{\textstyle\text{III}}{\xrightarrow{\quad}}\;\;
    \raisebox{-0.47\height}{\includegraphics{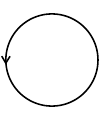}}
    $}
    \caption{Reductions of type I, II or III for annular strand diagrams.  In the first move, the shaded disk must not contain the central hole.  In the third move, both loops must be free loops, and the shaded annulus must not contain any vertices. (Picture taken from~\cite{BeMa}.)}
    \label{fig:reduction-types}
\end{figure}%
A \newword{reduction} of an annular strand diagram is any one of the moves shown in Figure~\ref{fig:reduction-types}.  An annular strand diagram is \newword{reduced} if it is not sibject to any reductions.  Two annular strand diagrams are \newword{equivalent} if one can be obtained from the other by a sequence of reductions and inverse reductions.  Again, every annular strand diagram is equivalent to a unique reduced annular strand diagram.  

The following theorem is proven in~\cite[Section~3]{BeMa}.  In the case where $i=j=1$ it gives a solution to the conjugacy problem in Thompson's group~$F$.  Our ideas in Sections~\ref{sec:UpperBound} and \ref{sec:LowerBound} are both based on the proof of this theorem.

\begin{theorem}\label{thm:ConjugacySolution}
Let\/ $\f$ be a reduced\/ $(i,i)$-strand diagram and let\/ $\g$ be a reduced\/ $(j,j)$-strand diagram.  Then the following are equivalent:
\begin{enumerate}
    \item The elements\/ $\f$ and\/ $\g$ are conjugate in the groupoid of reduced strand diagrams.\smallskip
    \item The reduced annular strand diagrams obtained by closing\/ $\f$ and\/ $\g$ and reducing are the same.
\end{enumerate}
\end{theorem}
\begin{proof}[Sketch of Proof]
The implication (1) $\Rightarrow$ (2) is easy.  For the converse, call a strand diagram \newword{cyclically reduced} if its closure is already reduced as an annular strand diagram.  It is not hard to show that every $(k,k)$-strand diagram is conjugate to a cyclically reduced strand diagram (see~\cite[Proposition~3.2]{BeMa} or the stronger Lemma~\ref{lem:MainLemma1} below), so we may assume that $\f$ and $\g$ are cyclically reduced and have the same closure.

Let $\f^\infty$ be the lift of the closure of $\f$ to the universal cover of the annulus.  Then $\f^\infty$ can be viewed as an infinite concatenation of $\f$'s, i.e.
\[
\f^\infty = \bigcup_{k\in\mathbb{Z}} \f_k
\]
where each $\f_k$ is a copy of~$\f$ and the sinks of each $\f_k$ are the same as the sources of~$\f_{k+1}$.  We can also decompose $\f^\infty$ as an infinite concatenation $\bigcup_{k\in\mathbb{Z}} \g_k$ of copies of~$\g$.  Indeed, we can choose such a decomposition so that $\g_0\subseteq \bigcup_{k=1}^\infty \f_k$.
\begin{figure}
    \centering
    \includegraphics{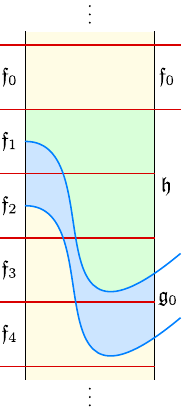}
    \caption{The deck transformation $\delta$ takes $\f_0\cup \h$ to $\h\cup \g_0$, and hence $\f\h=\h\g$.}
    \label{fig:UniversalCover}
\end{figure}
Let $\h$ be the strand diagram that lies between the bottom of $\f_0$ and the top of~$\g_0$.
Then $\delta(\f_0\cup \h) = \h\cup \g_0$, where $\delta\colon \f^\infty\to \f^\infty$ is the deck transformation that maps each $\f_k$ to $\f_{k+1}$ (see~Figure~\ref{fig:UniversalCover}).  We conclude that the strand diagrams $\f\h$ and $\h\g$ are the same, so $\f$ and $\g$ are conjugate in the groupoid of strand diagrams.
\end{proof}

\section{Norm and Length}

For $f\in F$, let $\ell(f)$ denote the word length of $f$ with respect to the $\{x_0,x_1\}$ generating set. An explicit formula for $\ell(f)$ was first given by Fordham~\cite{For}, and variants of Fordham's formula were subsequently published by Belk and Brown~\cite{BeBr} and Guba~\cite{Guba}.

Define the \newword{norm} $\|\f\|$ of a strand diagram $\f$ is its number of interior nodes (i.e.~merges and splits).  Note that $\|\f^{-1}\|=\|\f\|$, $\|\f\cdot \g\|=\|\f\|+\|\g\|$, $|\f\oplus\g\| = \|\f\| + \|\g\|$, and $\|\f\g\| \leq \|\f\|+\|\g\|$ for all strand diagrams $\f$ and $\g$.

If $f$ is an element of $F$ (i.e.~a reduced $(1,1)$-strand diagram), then $f$ always has the same number of merges as splits, and therefore the norm $\|f\|$ must be even. The following proposition relates the norm of each element of $F$ to its word length.

\begin{proposition}\label{prop:NormAndLength}
For any $f\in F$, we have
\[
\frac{\|f\|}{2}-2 \leq \ell(f) \leq 2\hspace{0.083333em}\|f\|.
\]
\end{proposition}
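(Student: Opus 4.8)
The plan is to pass from the norm to the combinatorial size of the reduced tree pair and then to read off both inequalities from a known length formula. Writing $f$ as a reduced $(1,1)$-strand diagram, its top part (all splits) and bottom part (all merges) are binary trees with a common number of leaves, say $N$; each such tree has exactly $N-1$ interior nodes, so $\|f\|=2(N-1)$. The two asserted inequalities are therefore equivalent to
\[
N-3 \;\le\; \ell(f) \;\le\; 4(N-1),
\]
and everything reduces to comparing $\ell(f)$ with the number of interior nodes $N-1$ of a single tree.

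First I would invoke the explicit length formula of Fordham and its variants~\cite{For,BeBr,Guba} mentioned above: each writes $\ell(f)$ as a sum of $N-1$ non-negative integer weights, one per interior node of the reduced tree pair, each weight lying in a fixed finite range, and with all but the two ``outermost'' nodes contributing at least $1$. The upper bound is then immediate, since $\ell(f)=\sum w \le W(N-1)$ for the maximal weight $W$, and any bound $W\le 4$ (Fordham's weights never exceed $3$) gives $\ell(f)\le 4(N-1)=2\|f\|$. For the lower bound, discarding the at most two zero-weight nodes yields $\ell(f)\ge (N-1)-2=\tfrac{\|f\|}{2}-2$. This is sharp: $x_0$ has $N=3$ and $x_1$ has $N=4$, both of length $1$, which already forces the two boundary nodes to be the only ones permitted to vanish and pins down the constant $-2$.

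The main obstacle is extracting exactly these two facts about the weights, that each lies in a bounded range and that at most two of them can equal $0$, from whichever normalization of the formula one adopts; this is a matter of tracing the node-type bookkeeping in~\cite{For,BeBr,Guba} rather than a new argument. If one instead wanted a self-contained proof, the natural route is to build $f$ generator by generator along a geodesic and track $N$. The upper bound would become a depth-first construction of the two trees in which moving between adjacent nodes and planting a node each cost $O(1)$ generators and every tree edge is traversed twice, giving total cost $O(N)$; the point to watch is to amortize so as to avoid the naive $O(N^2)$ blow-up incurred by rewriting each $x_i$ in terms of $x_0,x_1$. The lower bound is the genuinely delicate direction: a single generator can raise $N$ by $2$ (for instance $N(x_0x_1)=5$ while $N(x_0)=3$), so a naive count yields only $\ell(f)\ge (N-1)/2$. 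Recovering the sharp slope-one estimate $N\le \ell(f)+3$ requires an amortized or potential-function argument showing that such jumps cannot recur along a geodesic (consistently with $N(x_1^{\,n})=n+3$ growing with slope one), and designing that potential is where the real work would lie.
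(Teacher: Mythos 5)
The upper-bound half of your argument is essentially the paper's: each tree in the reduced tree pair has $\|f\|/2$ carets, and Fordham's formula expresses $\ell(f)$ as a sum of one weight per caret pair, each weight at most $4$ (not $3$, as you assert parenthetically, but this does not matter since you only use $W\le 4$). That part is fine.

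The lower bound is where you have a genuine gap, and you have in effect flagged it yourself. Your argument rests on the claim that at most two of the Fordham weights can vanish, and you defer its verification to ``tracing the node-type bookkeeping.'' That claim is not obviously true and is at least delicate: the only zero entry in Fordham's table is the one attached to pairs of carets of the leftmost type, and \emph{several} carets of a tree pair can have that type simultaneously --- for instance, for elements supported in an interval $[0,2^{-k}]$ both trees begin with a long left vine, so one must rule out (or compensate for) three or more vanishing weights. Since the bound $\ell(f)\ge\|f\|/2-2$ is sharp (e.g.\ at $x_1^n$, where exactly two weights vanish), there is no slack to absorb a third zero, so your argument needs exactly this unproved fact. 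Your fallback ``potential function along a geodesic'' route is likewise only a research plan. The paper's lower-bound proof avoids the length formula entirely and is much shorter: it observes that $\|\t_3^{-1}x_0\t_3\|=\|\t_3^{-1}x_1\t_3\|=2$ for the right vine $\t_3$, writes a geodesic word for $f$ as $\t_3\bigl(\t_3^{-1}s_1\t_3\bigr)^{\epsilon_1}\cdots\bigl(\t_3^{-1}s_n\t_3\bigr)^{\epsilon_n}\t_3^{-1}$, and uses subadditivity of the norm to get $\|f\|\le 2\hspace{0.08333em}\ell(f)+4$ directly. I would recommend replacing your weight-counting argument for the lower bound with a conjugation trick of this kind, or else actually carrying out the analysis of which caret-type pairs can have weight zero and how many times they can occur.
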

\begin{proof}
Observe that each tree in the reduced tree pair diagram for $f$ has $\|f\|/2$ carets. In \cite{For}, Fordham gives a formula for the length of an element as the sum of weights assigned to  corresponding pairs of carets in a tree pair diagram.  Each of his weights is at most~$4$, so it follows easily that $\ell(f)  \leq 2\hspace{0.0833333em}\|f\|$.

For the lower bound, observe that $\|\t_3^{-1}x_0\t_3\|=\|\t_3^{-1}x_1\t_3\| = 2$, where $\t_3$ is the right vine with $3$~leaves.  Given a word $f = s_1^{\epsilon_1}\cdots s_n^{\epsilon_n}$ where each $s_i\in\{x_0,x_1\}$ and each $\epsilon_i=\pm 1$, we can write 
\[
f = \t_3\bigl(\t_3^{-1}s_1\t_3\bigr)^{\epsilon_1}\cdots \bigl(\t_3^{-1}s_n\t_3\bigr)^{\epsilon_n}\t_3^{-1}
\]
and hence
\[
\|f\| \leq \|\t_3\| + \sum_{i=1}^n \|\t_3^{-1}s_i\t_3\| + \|\t_3\| = 2n+4.
\]
Thus $\|f\|\leq 2\hspace{0.08333em}\ell(f)+4$, so $\ell(f) \geq \|f\|/2 - 2$.
\end{proof}

\begin{remark}\label{rem:BetterNormLength}
In fact we have \[
\frac{\|f\|}{2} - 2 \leq \ell(f) \leq 2\hspace{0.083333em}\|f\|-8
\]
whenever $\|f\|\geq 6$, since the leftmost pair of corresponding carets in a tree pair diagram always has weight~$0$ and the two rightmost pairs of corresponding carets each have weight at most~$2$. Both bounds are sharp, with the lower bound realized by the elements $x_1^n$ and the upper bound realized by the elements
\[
x_0^2\bigl(x_1x_0^{-1}\bigr)^nx_1^{-1}\bigl(x_0x_1^{-1}\bigr)^nx_0^{-1}.
\]
\end{remark}

\begin{remark}\label{rem:NotV}
Burillo, Cleary, Stein, and Taback have proven an analog of Proposition~\ref{prop:NormAndLength} for Thompson's group~$T$~\mbox{\cite[Theorem~5.1]{BCST}}, but no analogous result holds for Thompson's group $V$.  The trouble is that $V$ allows arbitrary permutations of the leaves of a tree diagram, so there are at least $(n+1)!$ different elements $f\in V$ with $\|f\|\leq 2n$, and therefore $\ell(f)$ is not bounded above by any linear function of~$\|f\|$.  However, Birget has proven that there exists a constant $C>0$ such that $\ell(f) \leq C\hspace{0.08333em} \|f\| \log \|f\|$ for all $f\in V$~\cite[Theorem~3.8]{Birget}.
\end{remark}

\section{An Upper Bound}
\label{sec:UpperBound}

In this section we prove our upper bound for the conjugator length in ~$F$.  Throughout this section, we say that an $(i,i)$-strand diagram $\f$ is \newword{strongly cyclically reduced} if its closure is already a reduced annular strand diagram, and this has the same number of connected components as~$\f$.  (These correspond to the ``absolutely reduced normal diagrams'' defined by Guba and Sapir in~\cite{GuSa}.)

\begin{lemma}\label{lem:MainLemma1}
Let\/ $\f$ be a nontrivial reduced\/ $(i,i)$-strand diagram with\/ $\|\f\|=n$.  Then there exists a reduced\/ $(i,j)$-strand diagram\/ $\h$ so that\/ $\f'=\h^{-1}\f\h$ is strongly cyclically reduced,\/ $\|\f'\|\leq n$, and
\[
\|\h\|\leq 1+\frac{n(n+4i-6)}{8}.
\]
\end{lemma}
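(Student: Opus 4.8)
The plan is to induct on the norm $n=\|\f\|$ (which is necessarily even, since an $(i,i)$-diagram has equally many merges as splits), realizing the reduction of the closure of $\f$ to its canonical reduced annular diagram through a sequence of conjugations. First I would record how a reduced diagram can fail to be strongly cyclically reduced. Because $\f$ is already reduced, no dipole of its closure can lie entirely inside $\f$; tracing strands across the cut where sink $k$ is glued to source $k$ shows that any dipole of the closure must be an \emph{aligned} pair occupying some adjacent positions $p,p+1$ — a merge at the top of $\f$ together with a split at the bottom (or, dually, a split at the top and a merge at the bottom). If no such pair exists and the closure already has as many components as $\f$, then $\f$ is strongly cyclically reduced and we take $\h$ trivial, so $\|\h\|=0$ and the bound holds; this is the base case.

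For the inductive step, suppose there is an aligned merge $\mu$ at the top in positions $p,p+1$ whose partner is a split at the bottom in the same positions. Writing $\f=(\mathrm{id}\oplus\mu\oplus\mathrm{id})\cdot\f_2$, conjugation by $\h_0=\mathrm{id}\oplus\mu\oplus\mathrm{id}$ gives $\h_0^{-1}\f\h_0=\f_2\cdot(\mathrm{id}\oplus\mu\oplus\mathrm{id})$, in which $\mu$ now sits directly beneath its partner split and forms a genuine dipole. Reducing that dipole (and any cascade it triggers) yields a reduced $(i-1,i-1)$-diagram $\f_1$ with $\|\f_1\|\le n-2$; the dual case instead produces an $(i+1,i+1)$-diagram. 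Applying the inductive hypothesis to $\f_1$ gives $\h_1$ with $\f'=\h_1^{-1}\f_1\h_1$ strongly cyclically reduced and $\|\f'\|\le\|\f_1\|\le n$, and I would take $\h=\h_0\h_1$, so that $\|\h\|\le\|\h_0\|+\|\h_1\|$.

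To see that this can be arranged within budget, set $B(n,i)=1+\tfrac{n(n+4i-6)}{8}$, so that the inductive hypothesis reads $\|\h_1\|\le B(\|\f_1\|,i_1)\le B(n-2,i_1)$ (using that $B$ is increasing in $n$). Comparing values shows that a step leaving the strand number unchanged may spend up to $\tfrac n2+i-2$, a step that lowers it may spend up to $n+i-3$, and a step that raises it may spend only $i-1$. A single aligned dipole is cancelled by $\h_0$ of norm $1$, comfortably inside the first two budgets, and the cumulative cost of such cancellations accounts for the linear term $\tfrac{in}{2}$ of the bound. The quadratic reserve $\tfrac{n^2}{8}$ is what remains to pay for the expensive moves.

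I expect the main obstacle to be exactly those expensive moves, together with the bookkeeping they demand. The delicate situation is when the closure of $\f$ is reduced but has strictly fewer components than $\f$: then there is no dipole to cancel, yet $\f$ is not strongly cyclically reduced, and the only remedy is to rotate the cut — a conjugation that does not lower $n$ and may cost as much as the current norm. Controlling how many such rotations are needed and showing that their total cost stays within the quadratic reserve is the heart of the matter; one must also handle the tight budget $i-1$ for the strand-increasing steps (where the global slack ``$1+$'' in $B$ is consumed, for instance at the initial step out of $i=1$), and verify that once all aligned pairs are gone and the cut has been chosen to minimize components, the resulting diagram is genuinely strongly cyclically reduced.
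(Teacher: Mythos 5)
You follow the same broad strategy as the paper---realize reductions of the closure by conjugations of controlled cost and induct---and your budget computations are accurate, but there are two genuine gaps, both of which the paper closes by inducting on $i+n$ rather than on $n$ alone. The case you flag as ``the heart of the matter'' really is unresolved in your write-up: when the closure of $\f$ is reduced but has fewer components than $\f$, the required conjugation does not lower $n$, so an induction on $n$ does not terminate there, and you give no bound on how many such steps occur or what each costs. The paper's fix is concrete. In this case $\f=\f_1\oplus\f_2$ with $\f_1$ a $(j,k)$-diagram, $j<k$, and the identity $\f_1\oplus\f_2=(\e_j\oplus\f_2)(\f_1\oplus\e_{i-k})$ lets one conjugate by whichever of the two factors has fewer interior nodes. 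This costs at most $n/2$ (not $n$, as you feared), which is exactly the slack $B(n,i)-B(n,i-1)=n/2$ available for a step that preserves $n$ while lowering the strand count; and it strictly decreases the number of strands to $i+j-k\le i-1$, so $i+n$ drops and the induction closes.

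The second gap is that your dichotomy ``either there is an aligned merge/split pair across the cut, or (given equal component counts) $\f$ is strongly cyclically reduced'' is false. If sources $j$ and $j+1$ are joined directly to sinks $j$ and $j+1$, the closure contains two concentric free loops and is therefore not reduced, yet there is no merge/split dipole and the component counts of $\f$ and of its closure agree. This is the paper's type~III reduction; it is removed by conjugating by a single merge, which again leaves $n$ fixed while decreasing $i$ by one---a second, independent reason the induction must run on $i+n$. (A smaller inaccuracy: the split-above/merge-below dipole occupies a single position $j$ of the cut, not an adjacent pair $p,p+1$; only the merge-above/split-below dipole spans two positions. Also note that your tight budget $i-1$ for the strand-increasing step requires $i\ge 2$, which is why the $i=1$ case must be fed through a preliminary step before that bound is invoked.)
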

\begin{proof}
We proceed by induction on $i+n$.  The base case is $i+n=1$, for which $\f$ is the trivial $(1,1)$-strand diagram and is therefore already strongly cyclically reduced.  

For the induction step, suppose first that the closure of $\f$ has fewer components than~$\f$. This occurs when $\f$ can be written as $\f = \f_1\oplus \f_2$, 
where $\f_1$ is a $(j,k)$-strand diagram with $j\ne k$ and $\f_2$ is an $(i-j,i-k)$-strand diagram.  Without loss of generality, suppose that $j<k$.  Then we can rewrite $\f$ as a product
\[
\f = (\e_j \oplus \f_2)(\f_1 \oplus \e_{i-k})
\]
as shown in Figure~\ref{fig:DiagonalCut},
\begin{figure}
    \centering
    $\raisebox{-0.47\height}{\includegraphics{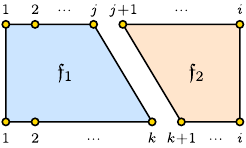}}\;\;\quad=\quad\;\;\raisebox{-0.47\height}{\includegraphics{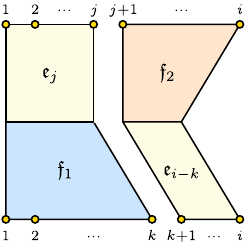}}$
    \caption{The equality $\f_1\oplus \f_2 = (\e_j\oplus \f_2)(\f_1\oplus\e_{i-k})$, where $\e_j$ and $\e_{i-k}$ are trivial strand diagrams.}
    \label{fig:DiagonalCut}
\end{figure}%
where $\e_j$ and $\e_{i-k}$ denote trivial strand diagrams with $j$ strands and $i-k$ strands, respectively.  Let $\k$ be whichever of $(\e_j\oplus\f_2)$ and $(\f_1\oplus \e_{i-k})^{-1}$ has fewer interior nodes, and let $\f'=\k^{-1}\f\k=(\f_1\oplus \e_{i-k})(\e_j\oplus \f_2)$. Then $\f'$ is an $(i+j-k,i+j-k)$-strand diagram with $\|\f'\|\leq n$ (there may be fewer than $n$ interior nodes if $\f'$ is not initially reduced), and $\|\k\|\leq n/2$.  Since $i+j-k\leq i-1$, we know that $(i+j-k)+\|\f'\|< i+n$. Therefore, it follows from our induction hypothesis that there exists an $\h$ with 
\[
\|\h\|\leq 1 + \frac{n(n+4(i-1)-6)}{8} = 1+ \frac{n(n+4i-10)}{8}.
\]
such that $\h^{-1}\f'\h$ is strongly cyclically reduced.  Then $(\k\h)^{-1}\f(\k\h)$ is strongly cyclically reduced and
\[
\|\k\h\|\leq \|\k\|+\|\h\| \leq \frac{n}{2} + 1 + \frac{n(n+4i-10)}{8} = 1+\frac{n(n+4i-6)}{8}.
\]

Now consider the case where the closure of $\f$ has the same number of components as~$\f$.  Note then that any trivial strands of $\f$ (i.e.~edges whose endpoints are a source and a sink) must correspond to free loops in the closure. If the closure of $\f$ is reduced then we are done, so it must be possible to apply a reduction of type I, II, or III to the closure of~$\f$ as described in~\cite{BeMa} (see Figure~\ref{fig:reduction-types}).

Suppose first that the closure of $\f$ is subject to a type~I reduction.  Since all the trivial strands of $\f$ correspond to free loops, there must be a $1\leq j\leq i-1$ so that sources $j$ and $j+1$ are connected to a merge and sinks $j$ and $j+1$ are connected to a split.  Let $\k$ be the $(i,i-1)$-strand diagram with exactly one merge connected to sources $j$ and $j+1$ and sink $j$.  Then $\f'=\k^{-1}\f\k$ is a nontrivial $(i-1,i-1)$-strand diagram with $\|\f'\|= n-2$.  Since $(i-1)+(n-2) < i+n$, our induction hypothesis tells us that there exists a reduced strand diagram $\h$ with
\[
\|\h\| \leq 1 + \frac{(n-2)\bigl((n-2)+4(i-1)-6\bigr)}{8} = 1 + \frac{(n-2)(n+4i-12)}{8}
\]
such that $\h^{-1}\f'\h$ is strongly cyclically reduced.  Then $(\k\h)^{-1}\f(\k\h)$ is strongly cyclically reduced and
\[
\|\k\h\|\leq \|\k\|+\|\h\| \leq 1 + 1 + \frac{(n-2)(n+4i-12)}{8} \leq 1 + \frac{n(n+4i-6)}{8},
\]
where the last inequality follows from the fact that $n\geq 2$ and $i\geq 2$.

Now suppose that the closure of $\f$ is subject to a type~II reduction.  Again, since all the trivial strands of $\f$ correspond to free loops, there must exist a $1\leq j\leq i$ so that source $j$ of $\f$ is connected to a split and sink $j$ of $\f$ is connected to a merge.  Let $\k$ be the $(i,i+1)$-strand diagram with exactly one split connected to source $j$ and sinks $j$ and $j+1$.  Then $\f'=\k^{-1}\f\k$ is a nontrivial $(i+1,i+1)$-strand diagram with $\|\f'\|= n-2$.  Since $(i+1)+(n-2)<i+n$, our induction hypothesis tells us that there exists a reduced strand diagram $\h$ with
\[
\|\h\|\leq 1+ \frac{(n-2)\bigl((n-2)+4(i+1)-6\bigr)}{8} = 1+\frac{(n-2)(n+4i-4)}{8}
\]
so that $\h^{-1}\f'\h$ is strongly cyclically reduced.  Then $(\k\h)^{-1}\f(\k\h)$ is strongly cyclically reduced and
\[
\|\k\h\|\leq \|\k\|+\|\h\| \leq 1 + 1 + \frac{(n-2)(n+4i-4)}{8} \leq 1 + \frac{n(n+4i-6)}{8},
\]
where the last inequality follows from the fact that $i\geq 2$.

Finally, suppose that the closure of $\f$ is subject to a type~III reduction.  Then there exists a $1\leq j\leq i-1$ so that source $j$ is connected directly to sink $j$ and source $j+1$ is connected directly to sink $j+1$ in~$\f$.  Let $\k$ be the $(i,i-1)$-strand diagram with a single merge connected to sources $j$ and $j+1$ and sink $j$. Then $\f'=\k^{-1}\f\k$ is an $(i-1,i-1)$-strand diagram with $\|f'\|=n$.  Since $(i-1)+n< i+n$, our induction  hypothesis tells us that there exists a reduced strand diagram $\h$ with
\[
\|\h\|\leq 1+ \frac{n\bigl(n+4(i-1)-6\bigr)}{8} = 1+\frac{n(n+4i-10)}{8}
\]
so that $\h^{-1}\f'\h$ is strongly cyclically reduced.  Then $(\k\h)^{-1}\f(\k\h)$ is strongly cyclically reduced and
\[
\|\k\h\|\leq \|\k\|+\|\h\| \leq 1 + 1 + \frac{n(n+4i-10)}{8} \leq 1 + \frac{n(n+4i-6)}{8}.
\]
The last inequality follows from the fact that $\f$ is nontrivial, and hence $n\geq 2$.
\end{proof}

\begin{corollary}\label{cor:ElementFStronglyCyclicallyReduced}
Let $f\in F$ with $\|f\|=n$.  Then there exists a reduced\/ $(1,j)$-strand diagram\/ $\h$ so that\/ $\f'=\h^{-1}\f\h$ is strongly cyclically reduced,\/ $\|\f'\|\leq n$, and
\[
\|\h\| \leq \frac{(n-1)^2 +7}{8}.
\]
\end{corollary}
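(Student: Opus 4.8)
The plan is to derive this as an immediate specialization of Lemma~\ref{lem:MainLemma1} to the case $i=1$, since an element $f\in F$ is by definition a reduced $(1,1)$-strand diagram. The entire content of the corollary is already contained in the lemma; what remains is to handle the one case the lemma excludes and to check that the $i=1$ bound simplifies to the stated expression.

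First I would dispose of the trivial case. The lemma requires $\f$ to be nontrivial, so I must treat $\|f\|=n=0$ separately. When $f$ is the trivial $(1,1)$-strand diagram, its closure is a single free loop, which is already reduced and has the same number of components as $f$; hence $f$ is strongly cyclically reduced. Taking $\h$ to be the trivial $(1,1)$-strand diagram gives $\f'=f$ with $\|\h\|=0$, and the claimed bound $\tfrac{(n-1)^2+7}{8}=1$ holds since $0\le 1$.

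For the main case, recall that $\|f\|$ is even, so $n\ge 2$ and $f$ is nontrivial. Applying Lemma~\ref{lem:MainLemma1} with $i=1$ produces a reduced $(1,j)$-strand diagram $\h$ (the source count $i=1$ is preserved, so $\h$ is indeed of the required type $(1,j)$) such that $\f'=\h^{-1}\f\h$ is strongly cyclically reduced with $\|\f'\|\le n$ and
\[
\|\h\|\le 1+\frac{n(n+4\cdot 1-6)}{8}=1+\frac{n(n-2)}{8}.
\]
The only remaining step is the elementary simplification
\[
1+\frac{n(n-2)}{8}=\frac{8+n^2-2n}{8}=\frac{(n-1)^2+7}{8},
\]
which reproduces the stated bound exactly (so the inequality is in fact an equality for $i=1$).

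I expect no genuine obstacle here: the substantive induction lives entirely in the lemma. The only points requiring care are the bookkeeping observations that the trivial element must be handled outside the lemma's hypotheses, that $\|f\|$ being even rules out $n=1$, and that conjugating by an $(i,j)$-strand diagram from an $(1,1)$-diagram yields a conjugator of type $(1,j)$ as demanded by the statement.
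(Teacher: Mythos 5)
Your proposal is correct and matches the paper's (implicit) argument exactly: the corollary is stated in the paper without a separate proof precisely because it is the $i=1$ specialization of Lemma~\ref{lem:MainLemma1}, with the algebraic identity $1+\tfrac{n(n-2)}{8}=\tfrac{(n-1)^2+7}{8}$ giving the stated bound. Your extra care with the trivial case and the parity of $\|f\|$ is sound bookkeeping that the paper leaves tacit.
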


\begin{lemma}\label{lem:SameClosure}
Let\/ $\f$ and\/ $\g$ be strongly cyclically reduced strand diagrams whose closures are the same, and let $n=\|\f\|=\|\g\|$.  Then there exists a reduced strand diagram\/~$\h$ so that\/ $\g=\h^{-1}\f\h$ and\/ $\|\h\|\leq \frac{3}{2}n^2$.
\end{lemma}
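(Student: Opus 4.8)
The plan is to use the hypothesis directly: since $\f$ and $\g$ have the same closure, both are obtained by cutting open a single reduced annular strand diagram $A$ (with $n$ interior nodes) along two radial seams, and the conjugator should be the strand diagram read off the annular region lying between these two seams. In this picture the whole content of ``$\f$ and $\g$ are conjugate'' is simply that they are two openings of the \emph{same} annulus, and the conjugator is the piece of $A$ that records how to slide one seam onto the other.

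First I would fix an isomorphism identifying the closure of $\f$ with the closure of $\g$, and use it to regard $\f$ and $\g$ as two openings of the common diagram $A$. A rotation of the annulus carries the seam of $\f$ to the seam of $\g$; let $\h$ be the $(i,j)$-strand diagram obtained by reading off the portion of $A$ swept out by this rotation, and let $\h'$ be the complementary portion. Reading $A$ once around from the $\f$-seam factors as $\f=\h\cdot\h'$, while reading once around from the $\g$-seam factors as $\g=\h'\cdot\h$; hence $\g=\h^{-1}\f\h$ after reduction. Before this is clean I would have to handle the bookkeeping: free-loop components of $A$ (corresponding to trivial strands) must be matched up correctly, the two seams must be taken to meet every component transversally and away from its nodes so that $\h$ and $\h'$ are genuine strand diagrams, and one must check that reducing $\h\cdot\h'$ and $\h'\cdot\h$ returns $\f$ and $\g$ exactly rather than merely conjugates of them.

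The main obstacle is the norm bound. The swept region $\h$ contains at most all $n$ interior nodes of $A$, which already controls $\|\h\|$ crudely, but the delicate point is that the seam may have to be dragged across the bulk of the diagram and the number of strands it crosses — the \emph{width} at which $\h$ sits — can be as large as $\Theta(n)$. I would therefore realise the rotation as a sequence of elementary moves, each sliding the seam past a single node (mirroring the type~I, II and III reductions used in Lemma~\ref{lem:MainLemma1}), re-reducing after each move so that the intermediate conjugators stay reduced. Tracking the width through this process and summing the per-move contributions over the at most $n$ nodes crossed is where the quadratic estimate $\|\h\|\le\tfrac32 n^2$ should emerge; keeping the running conjugator reduced without letting its width blow up is the part I expect to require the most care. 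As a consistency check, the argument should degrade gracefully when $A$ has several components, since the seam meets each component in a number of points equal to its winding number and all of these have to be transported simultaneously by the single rotation.
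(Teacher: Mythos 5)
Your overall framing -- two openings of the same annulus, with the conjugator read off the region between the two seams and the identity $\f\h=\h\g$ coming from sliding one seam to the other -- is the paper's approach (the paper carries it out in the universal cover of the annulus, writing $\f^\infty=\bigcup_k\f_k$ and taking $\h$ to be the region between $\f_0$ and a chosen copy $\g_0$ of $\g$). But there is a genuine gap exactly at the point you flag as delicate, and your two attempts to close it are both wrong. First, the claim that ``the swept region $\h$ contains at most all $n$ interior nodes of $A$'' is false: the region between the two seams is not an embedded subregion of the annulus but an immersed one (a union of consecutive fundamental domains in the universal cover), and it can cover each node of $A$ many times. Indeed it must sometimes do so -- the diagrams $\f_n,\g_n$ of Section~3 are strongly cyclically reduced with the same closure and norm $\Theta(n)$, yet every conjugator between them has norm $\Theta(n^2)$ -- so any argument yielding $\|\h\|\le n$ proves too much. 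Second, your fallback plan of sliding the seam past ``at most $n$ nodes'' and charging $O(n)$ per move has the same defect: the seam may have to pass each node once per lap around the annulus, and without a bound on the number of laps the sum is not controlled at all.

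The missing idea is precisely that bound on the number of laps, i.e.\ on how many copies $\f_1,\dots,\f_m$ of $\f$ the translate $\g_0$ can span in the universal cover. The paper gets $m\le 3n/2$ by a metric argument: put a deck-invariant geodesic metric on $\f^\infty$ in which each full edge of $\f_0$ has length $1$ and each half edge has length $1/2$, so that each copy of $\f$ has total edge length $3n/2$; then $\g_0$ also has total edge length $3n/2$, hence diameter at most $3n/2$, while crossing one copy of $\f$ costs at least $1$ (minimum source-to-sink distance). This gives $\g_0\subseteq\bigcup_{k=1}^{3n/2}\f_k$ and therefore $\|\h\|\le(3n/2)\,\|\f\|=\tfrac32 n^2$. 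Your proposal never produces an ingredient playing this role, so the quadratic estimate does not ``emerge'' from it. (The disconnected case is handled as you suggest, component by component, and your worry about free loops is moot here since the strong cyclic reducedness hypothesis lets one split off trivial components before running the argument.)
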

\begin{proof}
Suppose first that $\f$ and $\g$ are connected.  If $\f$ and $\g$ are the identity we are done, so suppose $\f$ and $\g$ are nontrivial.  As in the sketch of the proof of Theorem~\ref{thm:ConjugacySolution}, let $\f^\infty$ be the lift of the closure of $\f$ to universal cover of the annulus, with
\[
\f^\infty = \bigcup_{k\in\mathbb{Z}} \f_k = \bigcup_{k\in\mathbb{Z}} \g_k.
\]
Note that we can choose the decomposition into $\g_k$'s so that $\g_0\subseteq \bigcup_{k=1}^\infty \f_k$ and $\g_0$ intersects $\f_1$.
As before, we have $\f\h = \h\g$, where $\h$ is the strand diagram that lies between $\f_0$ and~$\g_0$ (see~Figure~\ref{fig:UniversalCover}).  We claim that
\[
\g_0 \subseteq \bigcup_{k=1}^{3n/2} \f_k.
\]
It follows that $\h\subseteq \bigcup_{k=1}^{3n/2} \f_k$, so $\|\h\|\leq (3n/2)\|\f\| = 3n^2/2$.

To prove the claim, define the \textit{full edges} of $\f_0$ to be those that start and end at trivalent vertices, and the \textit{half edges} of $\f_0$ to be those that have either a source or a sink at one end.  (Since $\f_0$ is connected and nontrivial, there are no edges directly from a source to a sink.) We place a $\delta$-invariant geodesic metric on $\f^\infty$ so that each full edge of $\f_0$ has length~$1$ and each half edge has length~$1/2$.  Since $\f_0$ has exactly $n$ interior nodes, the total length of all of the edges of $\f_0$ is~$3n/2$. Then the total length of all of the edges of $\g_0$ must be exactly $3n/2$, and in particular the diameter of $\g_0$ is at most $3n/2$. Since the minimum distance from a source to a sink in $\f_0$ is at least~$1$, the claim follows easily.

For the general case, suppose $\f=\f_1\oplus \cdots \oplus \f_k$, where each $\f_i$ is connected and has the same number of sources as sinks.  Since $\f$ and $\g$ are strongly cyclically reduced and have the same closure, it follows that $\g=\g_1\oplus \cdots \oplus \g_k$, where each $\g_i$ is connected, has the same number of sources as sinks, and has the same closure as~$\f_i$.  By the argument above, there exists for each $i$ a reduced strand diagram $\h_i$ with $\|\h_i\|\leq 3\|\f_i\|^2/2$ such that $\g_i=\h_i^{-1}\f_i\h_i$.  Then $\g=\h^{-1}\f\h$, where $\h=\h_1\oplus \cdots \oplus \h_n$ and
\[
\|\h\|=\sum_{i=1}^k \|\h_i\| \leq \sum_{i=1}^k \frac{3}{2}\|\f_i\|^2 \leq \frac{3}{2}\biggl(\sum_{i=1}^k \|\f_i\|\biggr)^2 = \frac{3}{2}\|\f\|^2.\qedhere
\]
\end{proof}

\begin{theorem}\label{thm:ConjugacyLength}
Let $f,g\in F$, with $\|f\|=i$ and $\|g\|=j$. Suppose $f$ and $g$ are conjugate, with the corresponding reduced annular strand diagram having $k$ nodes.  Then there exists an $h\in F$ so that $g=h^{-1}fh$ and
\[
\|h\| \leq \frac{(i-1)^2 + (j-1)^2 + 12k^2 + 14}{8}.
\]
\end{theorem}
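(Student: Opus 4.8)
My plan is to reduce everything to the two results just established: Corollary~\ref{cor:ElementFStronglyCyclicallyReduced} straightens each of $f$ and $g$ into a strongly cyclically reduced diagram at a cost quadratic in $i$ and in $j$ respectively, and Lemma~\ref{lem:SameClosure} then connects the two straightened diagrams at a cost quadratic in $k$. Write $\f$ and $\g$ for the reduced $(1,1)$-strand diagrams representing $f$ and $g$, so that $\|\f\|=i$ and $\|\g\|=j$.

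First I would apply Corollary~\ref{cor:ElementFStronglyCyclicallyReduced} to $\f$, obtaining a reduced $(1,p)$-strand diagram $\h_1$ with $\f_1:=\h_1^{-1}\f\h_1$ strongly cyclically reduced and $\|\h_1\|\leq\bigl((i-1)^2+7\bigr)/8$, and likewise to $\g$, obtaining a reduced $(1,q)$-strand diagram $\h_2$ with $\g_1:=\h_2^{-1}\g\h_2$ strongly cyclically reduced and $\|\h_2\|\leq\bigl((j-1)^2+7\bigr)/8$. The decisive point is to pin down the closures of $\f_1$ and $\g_1$. Forming the closure is invariant under conjugation in the strand-diagram groupoid, since in the closure of $\h^{-1}\f\h$ the final $\h$ joins up with the initial $\h^{-1}$ and cancels; hence $\f_1$ and $\f$ have the same reduced annular strand diagram, and by the solution to the conjugacy problem in~\cite{BeMa} this coincides with the reduced annular strand diagram of $\g$, which by hypothesis has $k$ nodes. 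Because $\f_1$ is strongly cyclically reduced, its closure is already reduced and so equals this diagram; since passing to the closure neither creates nor destroys trivalent nodes, we get $\|\f_1\|=k$, and the same argument gives $\|\g_1\|=k$. Thus $\f_1$ and $\g_1$ are strongly cyclically reduced, have equal norm $k$, and share a closure.

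Now Lemma~\ref{lem:SameClosure} applies verbatim to $\f_1$ and $\g_1$, producing a reduced $(p,q)$-strand diagram $\h_3$ with $\g_1=\h_3^{-1}\f_1\h_3$ and $\|\h_3\|\leq\frac{3}{2}k^2$. Setting $h=\h_1\h_3\h_2^{-1}$ and combining $\g=\h_2\g_1\h_2^{-1}$, $\g_1=\h_3^{-1}\f_1\h_3$, and $\f_1=\h_1^{-1}\f\h_1$ gives $\g=h^{-1}\f h$; tracking strand counts shows $\h_1$ is $(1,p)$, $\h_3$ is $(p,q)$, and $\h_2^{-1}$ is $(q,1)$, so $h$ is a genuine $(1,1)$-strand diagram, i.e.\ an element of~$F$. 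The norm bound then follows from subadditivity of $\|\cdot\|$:
\[
\|h\|\leq\|\h_1\|+\|\h_3\|+\|\h_2\|\leq\frac{(i-1)^2+7}{8}+\frac{3}{2}k^2+\frac{(j-1)^2+7}{8}=\frac{(i-1)^2+(j-1)^2+12k^2+14}{8}.
\]

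The one step that requires genuine care, rather than bookkeeping, is the identification $\|\f_1\|=\|\g_1\|=k$ together with the claim that $\f_1$ and $\g_1$ have a common closure; this is exactly where the characterization of conjugacy in $F$ by reduced annular strand diagrams enters, combined with the observation that the closure of a strongly cyclically reduced diagram is already reduced so that no nodes are lost in forming it. Everything else is a direct concatenation of the two lemmas, with Corollary~\ref{cor:ElementFStronglyCyclicallyReduced} supplying the $(i-1)^2$ and $(j-1)^2$ contributions and Lemma~\ref{lem:SameClosure} supplying the $12k^2$ contribution.
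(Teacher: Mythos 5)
Your proposal is correct and follows essentially the same route as the paper: apply Corollary~\ref{cor:ElementFStronglyCyclicallyReduced} to each of $f$ and $g$, invoke Lemma~\ref{lem:SameClosure} on the resulting strongly cyclically reduced diagrams, and take $h=\h_1\h_3\h_2^{-1}$. Your extra care in identifying the common closure with the reduced annular strand diagram and deducing $\|\f_1\|=\|\g_1\|=k$ is exactly the (largely implicit) step in the paper's argument, and is handled correctly.
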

\begin{proof}
By Corollary~\ref{cor:ElementFStronglyCyclicallyReduced}, there exist reduced strand diagrams $\h_1$ and $\h_2$ with
\[
\|\h_1\| \leq \frac{(i-1)^2+7}{8}\qquad\text{and}\qquad \|\h_2\|\leq \frac{(j-1)^2+7}{8}
\]
so that $\f=\h_1^{-1}f\h_1$ and $\g=\h_2^{-1}g\h_2$ are strongly cyclically reduced.  Then $\f$ and~$\g$ have the same closure and $\|\f\|=\|\g\|=i$, so by Lemma~\ref{lem:SameClosure} there exists a strand diagram $\h_3$ with $\|\h_3\|\leq \frac{3}{2}k^2$ so that $\g=\h_3^{-1}\f\h_3$.  Let $h=\h_1\h_3\h_2^{-1}$.  Then $g=h^{-1}fh$ which means that $h\in F$, and
\[
\|h\|\leq \|\h_1\|+\|\h_2\|+\|\h_3\| \leq \frac{(i-1)^2 + (j-1)^2 + 12k^2 + 14}{8}.\qedhere
\]
\end{proof}

\begin{corollary}\label{cor:MainCorollary}
Let $f$ and $g$ be conjugate elements of $F$ with $\ell(f)=i$ and $\ell(g) =j$, where $i\leq j$.  Then
\[
\cd(f,g) \leq 13i^2+j^2+27i+3j+20.
\]
In particular, the conjugator length function of Thompson's group $F$ satisfies
\[
\mathrm{CLF}(n)\leq \frac{7}{2}n^2+15n+20
\]
for all $n\in\mathbb{N}$.
\end{corollary}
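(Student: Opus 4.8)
The plan is to derive both inequalities from Theorem~\ref{thm:ConjugacyLength}, using Proposition~\ref{prop:NormAndLength} as a dictionary between the word length $\ell$ and the strand-diagram norm $\|\cdot\|$. The theorem already produces a conjugator whose norm is quadratic in the norms $\|f\|,\|g\|$ and the node count $k$ of the annular diagram, so the whole task is to translate its hypotheses and conclusion back into the language of word length and then perform a constrained maximization.

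First I would prove the per-pair bound. Given conjugate $f,g$ with $\ell(f)=i\le j=\ell(g)$, the left half of Proposition~\ref{prop:NormAndLength} gives $\|f\|\le 2i+4$ and $\|g\|\le 2j+4$. I then need to control the number of nodes $k$ of the reduced annular strand diagram of the common conjugacy class: by Corollary~\ref{cor:ElementFStronglyCyclicallyReduced} the element $f$ is conjugate to a strongly cyclically reduced $\f$ with $\|\f\|\le\|f\|$, and this annular diagram is exactly the reduced closure of $\f$; since forming a closure and reducing never creates interior nodes, $k$ is controlled by $\|f\|$, hence by $i$. Feeding $\|f\|$, $\|g\|$, and $k$ into Theorem~\ref{thm:ConjugacyLength} yields $h\in F$ with $g=h^{-1}fh$ and $\|h\|$ a quadratic in $i,j$, and the right half of Proposition~\ref{prop:NormAndLength} converts this back via $\cd(f,g)\le\ell(h)\le 2\|h\|$. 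Collecting terms produces an inequality of the form $\cd(f,g)\le 13i^2+j^2+(\text{linear})$: the quadratic part is immediate ($13i^2$ gathers the $(\|f\|-1)^2$ and $12k^2$ contributions, while $j^2$ comes from $(\|g\|-1)^2$), as is the $3j$ term, and the exact coefficient $27$ of $i$ and the constant $20$ are fixed by the precise additive control on $k$.

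For the global statement I would maximize the per-pair bound $B(i,j)=13i^2+j^2+27i+3j+20$ over the region $\{0\le i\le j,\ i+j\le n\}$. Since $B$ is increasing in each variable, its maximum lies on the edge $i+j=n$, where $B(i,n-i)=14i^2-2ni+24i+n^2+3n+20$ is convex in $i$ and hence maximized on $0\le i\le n/2$ at the right endpoint $i=j=n/2$. Evaluating there gives $\tfrac{7}{2}n^2+15n+20$, and because this is the continuous maximum over the region it dominates $\cd$ for every admissible integer pair (including the degenerate case where $f$ and $g$ are trivial, for which $\cd=0$); thus $\CLF(n)\le\tfrac{7}{2}n^2+15n+20$ for all $n$.

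The main obstacle I expect is the node bound on $k$. The constants $27$ and $20$ are sensitive to exactly how the $12k^2$ term is estimated, so one cannot settle for the crude estimate coming directly from $\|f\|\le 2i+4$; the argument must pin $k$ down to within a small additive constant of $2\ell(f)$. Everything else — combining two quadratic inequalities and carrying out the constrained maximization — is routine bookkeeping.
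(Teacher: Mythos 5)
Your overall route is the same as the paper's: use Proposition~\ref{prop:NormAndLength} to get $\|f\|\leq 2i+4$ and $\|g\|\leq 2j+4$, bound the node count $k$ of the reduced annular diagram, apply Theorem~\ref{thm:ConjugacyLength}, convert back with $\ell(h)\leq 2\,\|h\|$, and maximize over $i+j\le n$. The maximization step is correct in substance --- the paper encodes the same fact in the identity $13i^2+j^2+27i+3j+20=\frac{7}{2}n^2+15n+20-\frac{1}{2}(19i+5j+24)(j-i)$, while you argue by monotonicity and convexity; just note that convexity only places the maximum of $B(i,n-i)$ at an endpoint of $[0,n/2]$, so you should compare $B(0,n)=n^2+3n+20$ with $B(n/2,n/2)=\frac{7}{2}n^2+15n+20$ rather than assert it is the right endpoint.

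The one genuine gap is the step you yourself flag: the bound on $k$. Your route through Corollary~\ref{cor:ElementFStronglyCyclicallyReduced} only gives $k\leq\|f\|\leq 2i+4$, and feeding that into Theorem~\ref{thm:ConjugacyLength} yields $\cd(f,g)\leq 13i^2+j^2+51i+3j+56$, not the stated constants. The missing observation is that the closure of a nontrivial reduced $(1,1)$-strand diagram is never reduced: the unique source must be attached to a split and the unique sink to a merge, so gluing sink to source produces a merge whose output feeds directly into a split (a type~II configuration), and at least one reduction --- removing two nodes --- must occur. Hence the reduced annular strand diagram for $f$ has at most $\|f\|-2\leq 2i+2$ nodes, and then $\ell(h)\leq 2\,\|h\|\leq\frac{1}{4}\bigl((2i+3)^2+(2j+3)^2+12(2i+2)^2+14\bigr)=13i^2+j^2+27i+3j+20$ as required (the degenerate case $f=g=1$ gives $\cd(f,g)=0$ and is covered anyway). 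With that single observation supplied, your argument coincides with the paper's proof.
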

\begin{proof}
By Proposition~\ref{prop:NormAndLength}, we know that $\|f\|\leq 2i+4$ and $\|g\|\leq 2j+4$.  Moreover, since the closure of a nontrivial $(1,1$)-strand diagram is always subject to at least one reduction, the reduced annular strand diagram for $f$ (and hence $g$) has at most $\|f\|-2 = 2i+2$ nodes.  By Theorem~\ref{thm:ConjugacyLength} and Proposition~\ref{prop:NormAndLength}, there exists an $h\in F$ so that $g=h^{-1}fh$  and
\begin{multline*}
\qquad \ell(h) \leq 2\hspace{0.08333em}\|h\| \leq 
2\hspace{0.083333em}\frac{(2i+3)^2 + (2j+3)^2 + 12(2i+2)^2 + 14}{8} \\[3pt]
=
13i^2+j^2+27i+3j+20.
\qquad
\end{multline*}
If $n=i+j$, it follows that
\begin{align*}
\ell(h) &\leq 13i^2+j^2+27i+3j+20 \\[3pt] &= \frac{7}{2}n^2+15n+20 - \frac{1}{2}(19i+5j+24)(j-i).
\end{align*}
Since $i\leq j$, the term being subtracted on the right is positive, so we conclude that
\[
\ell(h) \leq \frac{7}{2}n^2+15n+20.\qedhere
\]
\end{proof}

\section{A Lower Bound}
\label{sec:LowerBound}

In this section we prove a quadratic lower bound on conjugator lengths for elements of~$F$.  The idea of the proof is to construct strand diagrams $\f$ and $\g$ with a linear number of vertices so that the corresponding conjugator $\h$ (see Figure~\ref{fig:UniversalCover}) has a quadratic number of vertices.  Our strategy is to use a regular grid of width $2n$ for the universal cover~$\f^\infty$ (see the sketch of the proof of Theorem~\ref{thm:ConjugacySolution}), and we choose $\f$ and $\g$ so that the strand diagram $\h$ between them is a large triangular section of the grid, as shown in Figure~\ref{fig:BigPicture}.

Unfortunately, a lower bound for the conjugator length requires understanding \textit{all} conjugators between a given pair of elements.  This will be the main source of complication in our proof, and will require some known results about centralizers in~$F$. For the following proposition, a \newword{proper root} of an element $f\in F$ is an element $g\in F$ such that $f=g^k$ for some $k\geq 2$.

\begin{proposition}\label{prop:Centralizers}
Let $f\in F$ and suppose that $f$ has no proper roots in $F$ and the reduced annular strand diagram for $f$ is connected.  Then the centralizer of $f$ in $F$ is the cyclic group $\langle f\rangle$ generated by~$f$.
\end{proposition}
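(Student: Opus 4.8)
The plan is to exploit the correspondence between centralizers in $F$ and the structure of the closure of the reduced annular strand diagram, which is developed in \cite{BeMa}. The central principle, established there, is that an element $h\in F$ commutes with $f$ precisely when the corresponding map on the universal cover of the closure commutes with the deck transformation $\delta$ determined by $f$. So first I would pass to the universal cover $\f^\infty$ of the reduced annular strand diagram of $f$, exactly as in the proof of Lemma~\ref{lem:SameClosure}. Since the reduced annular strand diagram of $f$ is connected, its universal cover $\f^\infty$ is a single connected object on which $\delta$ acts as a free, cocompact deck transformation, and any element of the centralizer of $f$ induces a $\delta$-equivariant combinatorial automorphism of $\f^\infty$.

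Next I would argue that such a $\delta$-equivariant automorphism is forced to be a power of $\delta$ itself. The idea is that the automorphism must preserve the combinatorial structure (the trivalent vertices, the splits and merges, and the edge-orientations coming from top-to-bottom), and because $\f^\infty$ is a connected bi-infinite diagram, the automorphism is determined by where it sends a single fundamental strip; commuting with $\delta$ then pins it down to a translation, i.e.~a power $\delta^m$ of the deck transformation. Concretely, every automorphism of $\f^\infty$ commuting with $\delta$ acts on the quotient $\f^\infty/\langle\delta\rangle$ (the closure of $f$) as an automorphism of that finite connected annular diagram, and connectedness together with the rigidity of reduced annular strand diagrams forces this quotient automorphism to be trivial. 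Lifting back, the original automorphism must be $\delta^m$ for some $m\in\mathbb{Z}$, which corresponds to the element $f^m$. This shows the centralizer is contained in the subgroup generated by the collection of ``roots'' that induce $\delta$-equivariant maps, namely the cyclic subgroup generated by a primitive root of $f$.

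Finally I would bring in the no-proper-roots hypothesis to eliminate the only remaining ambiguity. The argument of the previous paragraph shows that the centralizer $C_F(f)$ embeds as a subgroup of $\mathbb{Z}$ (the group of powers of $\delta$ that are realized by honest elements of $F$), and hence is itself infinite cyclic, generated by some $r\in F$ with $f=r^d$ for some $d\geq 1$. If $d\geq 2$ then $r$ is a proper root of $f$, contradicting the hypothesis; therefore $d=1$, $r=f$, and $C_F(f)=\langle f\rangle$. Since the inclusion $\langle f\rangle\subseteq C_F(f)$ is automatic, equality follows.

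The main obstacle I expect is the rigidity step: proving that a $\delta$-equivariant automorphism of the connected universal cover must be a power of $\delta$, rather than some exotic symmetry. This requires care because one must rule out automorphisms that permute strands nontrivially while still respecting $\delta$; the key leverage is that a reduced annular strand diagram has no ``hidden'' symmetries—its combinatorial type as a directed planar diagram, together with connectedness, rigidly determines the cyclic ordering around the annulus, so any automorphism commuting with the translation $\delta$ cannot shuffle the components and must act as a pure rotation. Making this precise will likely lean on the detailed combinatorial description of reduced annular strand diagrams and their automorphism groups in \cite{BeMa}, and translating that description into the statement that the only equivariant self-maps are the powers $\delta^m$ is where the real work lies.
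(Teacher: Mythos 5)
Your proposal is correct and follows essentially the same route as the paper, whose own proof is a one-line citation to Guba and Sapir's computation of centralizers in diagram groups \mbox{\cite[Theorem~15.35]{GuSa}}: the argument there is exactly your universal-cover picture, in which an element commuting with $f$ yields a translation of the bi-infinite diagram $\f^\infty$ commuting with~$\delta$, the group of such translations is discrete and hence infinite cyclic, and the no-proper-roots hypothesis forces the generator to be $f$ itself. The rigidity step you flag as the main obstacle is real but is precisely the content of the cited proof, so your outline matches the intended argument.
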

\begin{proof}
Guba and Sapir compute centralizers for elements of diagram groups in \cite[Theorem~15.35]{GuSa}, and this follows easily from their proof.
\end{proof}

\begin{figure}
\centering
\includegraphics{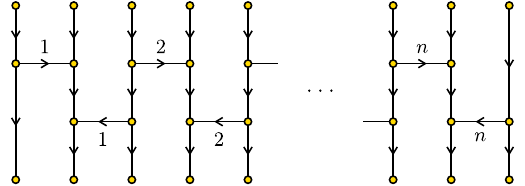}
\caption{The $(2n+1,2n+1)$-strand diagram~$\f_n$.}
\label{fig:Elementfn}
\end{figure}%
\begin{figure}
\centering
\includegraphics{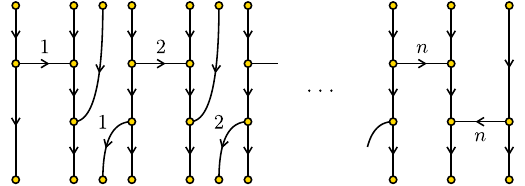}
\caption{The $(3n,3n)$-strand diagram~$\g_n$.}
\label{fig:Elementgn}
\end{figure}%
For any $n\geq 2$, let $\f_n$ and $\g_n$ be the strand diagrams shown in Figures~\ref{fig:Elementfn} and~\ref{fig:Elementgn}, and let $f_n,g_n\in F$ be the elements 
\[
f_n=\t_{2n+1}\f_n\t_{2n+1}^{-1}
\qquad\text{and}\qquad g_n=\t_{3n}\g_n\t_{3n}^{-1}
\]
where $\t_k$ denotes the right vine with $k$ leaves shown in Figure~\ref{fig:RightVine}. It is tedious but straightforward to check that
\begin{align*}
f_n &= x_0\, x_3^2\, x_7^2\, \cdots\, x_{4n-5}^2\, x_{4n-3}^{-2}\, \cdots\, x_5^{-2}\,x_1^{-2}\\[2pt]
&= x_0\bigl(x_1^{-1}x_0^{-1}x_1x_0^{-1}\bigr)^{n-1} x_1^{-2} \bigl(x_0x_1x_0x_1^{-1}\bigr)^{n-1}
\end{align*}
and
\begin{align*}
g_n &= x_0\,x_4^2\, x_9^2 \,\cdots\, x_{5n-6}^2\, x_{5n-4}^{-2} \,\cdots\, x_6^{-2}\, x_1^{-2} \\[2pt]
&= x_0\bigl(x_1^{-2}x_0^{-1}x_1x_0^{-1}\bigr)^{n-1}x_1^{-2}(x_0x_1x_0)^{n-1}.
\end{align*}
It follows that $\ell(f_n)\leq 8n-5$ and $\ell(g_n)\leq 8n-5$.

\begin{theorem}\label{thm:MainLowerBound}The elements $f_n$ and $g_n$ satisfy
\[
\cd(f_n,g_n) \geq \frac{n^2-5n-4}{2}.
\]
\end{theorem}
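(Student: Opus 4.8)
The plan is to use the cyclic centralizer of $f_n$ to parametrise all conjugators as a single coset of $\langle f_n\rangle$, and then to bound the norm of every element of that coset from below by a quadratic ``area'' estimate carried out in the universal cover of the common closure.

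First I would check the hypotheses of Proposition~\ref{prop:Centralizers} for $f_n$. The reduced annular strand diagram of $f_n$ (the closure of $\f_n$) is connected, as one reads off Figure~\ref{fig:Elementfn}. Moreover $f_n$ has no proper roots: abelianising sends $x_0\mapsto(1,0)$ and $x_k\mapsto(0,1)$ for $k\geq 1$, so the displayed word gives $f_n\mapsto(1,-2)$, and the first coordinate~$1$ prevents $f_n$ from being a proper power. Hence $C_F(f_n)=\langle f_n\rangle$. Fixing any conjugator $h_0$ with $g_n=h_0^{-1}f_nh_0$, every conjugator then has the form $f_n^mh_0$ with $m\in\mathbb{Z}$, so that
\[
\cd(f_n,g_n)=\min_{m\in\mathbb{Z}}\ell(f_n^mh_0).
\]

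Next I would move the problem into the groupoid of strand diagrams. Stripping the vines, the reduced diagram $\h=\t_{2n+1}^{-1}h_0\t_{3n}$ is a $(2n+1,3n)$-strand diagram with $\g_n=\h^{-1}\f_n\h$, and then $f_n^mh_0=\t_{2n+1}\bigl(\f_n^m\h\bigr)\t_{3n}^{-1}$. Since $\|\t_{2n+1}\|=2n$ and $\|\t_{3n}\|=3n-1$, subadditivity of the norm applied to $\f_n^m\h=\t_{2n+1}^{-1}\bigl(f_n^mh_0\bigr)\t_{3n}$ gives $\|f_n^mh_0\|\geq\|\f_n^m\h\|-5n+1$, while Proposition~\ref{prop:NormAndLength} gives $\ell(f_n^mh_0)\geq\tfrac12\|f_n^mh_0\|-2$. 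Chaining these, the theorem follows once I prove the uniform estimate
\[
\|\f_n^m\h\|\geq n^2-1\qquad\text{for every }m\in\mathbb{Z},
\]
since this yields $\ell(f_n^mh_0)\geq\tfrac12(n^2-5n)-2=(n^2-5n-4)/2$.

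The heart of the matter, and the step I expect to be hardest, is this area estimate. I would argue in the universal cover $\f^\infty$ of the common closure, exactly as in the proof of Lemma~\ref{lem:SameClosure}, writing it simultaneously as a bi-infinite concatenation of copies $\f_{n,k}$ of $\f_n$ and of copies $\g_{n,k}$ of $\g_n$. The diagram $\f_n^m\h$ is then the region of $\f^\infty$ caught between one $\f$-cut and one $\g$-cut, and $\|\f_n^m\h\|$ is precisely the number of interior nodes of $\f^\infty$ inside that region. Putting $\f_n^m\h$ into split-then-merge normal form with $L_m$ middle leaves gives $\|\f_n^m\h\|=2L_m-5n-1$, so the target is equivalent to $L_m\geq(n^2+5n)/2$. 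The elements were engineered so that the defining features of $\f_n$ and $\g_n$ sit at dyadic depths growing like $4k$ and $5k$ for $k=1,\dots,n-1$; any diagram realising the conjugacy must separate and match these features, and the subdivision forced at the $k$-th feature has depth growing linearly in~$k$, so that summing the contributions forces $L_m$ to grow quadratically in~$n$, with a careful count producing the required constant. The delicate point, and the real obstacle, is that this must survive every shift by a power of~$f_n$: translating by $f_n^m$ slides the $\f$-cut past whole copies of $\f_n$, and one must verify that no such translate brings a $\g$-cut close to an $\f$-cut, so that the enclosed region is a long thin parallelogram of quadratic area for \emph{every}~$m$ rather than a short region of linear area for some lucky~$m$. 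Ruling out all shifts at once, rather than merely estimating the single natural conjugator $h_0$, is where I expect the genuine work to lie.
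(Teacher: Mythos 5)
Your overall architecture is the same as the paper's: reduce to the coset $\{f_n^m h_0\}$ using the cyclic centralizer (your abelianization argument for the absence of proper roots is a perfectly good substitute for the paper's observation that $f_n'(0)=2$), strip the vines, and reduce the theorem to the uniform estimate $\|\f_n^m\h\|\geq n^2-1$. Up to that point everything is correct and tracks the paper's proof of Theorem~\ref{thm:MainLowerBound} almost line for line.

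The gap is that you do not prove the uniform estimate, and that estimate is the entire content of the theorem. Your final paragraph replaces the count with a heuristic (``features at dyadic depths growing like $4k$ and $5k$\,\dots\ a careful count producing the required constant'') and explicitly defers the hardest point, the uniformity in $m$, as ``where I expect the genuine work to lie.'' The paper resolves this by working with the \emph{explicit} conjugating diagram $\h_n$ of Figure~\ref{fig:BigPicture} rather than an abstract $\h$: it decomposes $\h_n$ into $n-1$ blocks $\r_1\cdots\r_{n-1}$ of two rows each with $\|\r_i\|=4(n-i)+1$, so $\|\h_n\|=(n-1)(2n+1)$, and then computes the cancellation in $\f_n^{k}\h_n$ exactly for every $k$: no cancellation for $k\geq 0$; for $k=-j$ with $1\leq j\leq n-1$ the product cancels precisely the blocks $\r_1,\ldots,\r_j$, giving $\|\f_n^{-j}\h_n\|=\|\h_n\|-2j(2n-2j-1)$, minimized at $j=\lfloor n/2\rfloor$ with value $n^2-1$; and for $j\geq n$ all of $\h_n$ cancels, leaving norm at least $2n^2+n+1$. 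This explicit three-case computation is exactly the ``ruling out all shifts at once'' that you flagged but did not carry out. A secondary problem: your identity $\|\f_n^m\h\|=2L_m-5n-1$ presumes that the reduced $(2n+1,3n)$-diagram $\f_n^m\h$ is in split-then-merge normal form, which reduced diagrams in the groupoid need not be (merge-above-split configurations are reduced); and since normalizing can only increase the node count, a lower bound on $L_m$ would not yield the lower bound on $\|\f_n^m\h\|$ that you need, so even the reformulation of the target is not sound as stated.
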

\begin{proof}
Since the closure of $\f_n$ is connected and already reduced, the reduced annular strand diagram for $f_n$ is connected.  Moreover, since $f_n'(0)=2$, the element $f_n$ has no proper roots in~$F$.   By Proposition~\ref{prop:Centralizers}, we deduce that the centralizer of $f_n$ is just $\langle f_n\rangle$.

\begin{figure}
\centering
\includegraphics{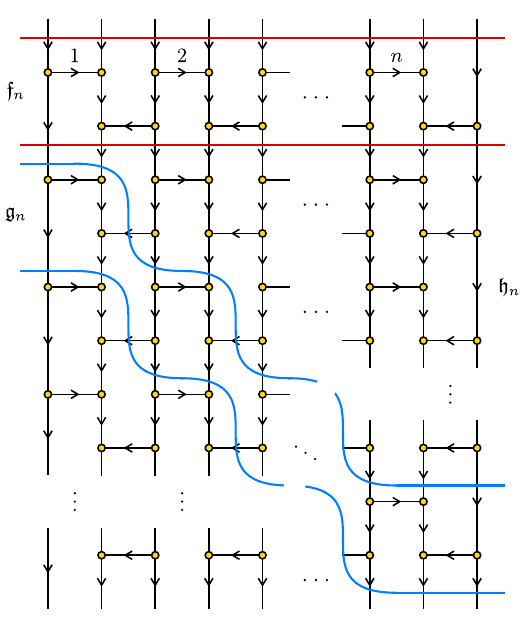}
\caption{The $(2n+1,3n)$-strand diagram $\h_n$ lies in the triangular region between the bottom red curve and the top blue curve. Note that $\f_n\h_n=\h_n\g_n$.}
\label{fig:BigPicture}
\end{figure}%
For each $1\leq i\leq n-1$, let
\[
\r_i = \e_{3i-1} \oplus \bigl(\f_{n-i}\cdot (\s \oplus \e_{2n-2i})\bigr)
\]
where $\e_j$ denotes the trivial $(j,j)$-strand diagram and $\s$ is the $(1,2)$-strand diagram with a single split.  Note that each $\r_i$ is a reduced $(2n+i,2n+i+1)$-strand diagram, with
\[
\|\r_i\| = \|\f_{n-i}\| + \|\s\| = 4(n-i)+1.
\]
Let
\[
\h_n = \r_1 \r_2 \cdots \r_{n-1}.
\]
Then $\h_n$ is the $(2n+1,3n)$-strand diagram shown in Figure~\ref{fig:BigPicture}, with each $\r_i$ containing two ``rows'' of interior nodes.  Note that
\[
\h_n = \sum_{i=1}^{n-1} \|\r_i\| = \sum_{i=1}^{n-1} \bigl(4(n-i)+1\bigr) = (n-1)(2n+1).
\]
Observe from the figure that $\f_n\h_n=\h_n\g_n$.  Then the element $h_n=\t_{2n+1}\h_n\t_{3n}^{-1}$ conjugates $f_n$ to $g_n$.  Since the centralizer of $f_n$ is $\langle f_n\rangle$, every conjugator from $f_n$ to $g_n$ must have the form $f_n^k h_n$ for some $k\in\mathbb{Z}$.  We must prove that $\ell(f_n^kh_n)\geq (n^2-5n-4)/2$ for every~$k\in\mathbb{Z}$.

It suffices to prove that $\bigl\|\f_n^k\h_n\bigr\| \geq n^2-1$ for each~$k$, since then
\begin{align*}
\bigl\|f_n^kh_n\bigr\| = \bigl\| \t_{2n+1}\f_n^k\h_n\t_{3n}^{-1}\bigr\|
&\geq \bigl\|\f_n^k\h_n\bigr\| - \|\t_{2n+1}\| - \|\t_{3n}\| \\
&\geq \bigl(n^2-1\bigr) - (2n) - (3n-1) = n(n-5)
\end{align*}
and therefore $\ell(f_n^kh_n) \geq n(n-5)/2-2 = (n^2-5n-4)/2$ by Proposition~\ref{prop:NormAndLength}. To compute~$\bigl\|\f_n^k\h_n\bigr\|$, observe that the concatenation $\f_n^k\cdot \h_n$ is not necessarily reduced, so we must worry about cancellation in the product $\f_n^k\h_n$.
There are three cases:
\begin{itemize}
    \item For $k\geq 0$, the concatenation $\f_n^k\cdot \h_n$ is reduced, so
    \[
    \bigl\|\f_n^k\h_n\bigr\|= k\|\f_n\|+\|\h_n\| \geq \|\h_n\| = (n-1)(2n+1) \geq n^2-1.
    \]
    \item For $k=-j$ with $1\leq j\leq n-1$, taking the product of $\f_n^{-j}$ and $\h_n$ cancels precisely the initial $\r_1\cdots\r_j$ of $\h_n$, i.e.~there exists a reduced strand diagram~$\mathfrak{l}$ so that $\f_n^{-j}=\mathfrak{l}\cdot(\r_1\cdots\r_j)^{-1}$ and $\f_n^{-j}\h_n = \mathfrak{l}\cdot (\r_{j+1}\cdots \r_{n-1})$. Then
    \begin{align*}
    \bigl\|\f_n^{-j}\h_n\bigr\| &= \|\h_n\| + j\|\f_n\|  - 2\|\r_1\cdots\r_j\| \\
    &= \|\h_n\| + j(4n)  - 2\sum_{i=1}^{j} \bigl(4(n-i)+1\bigr) \\
    &= \|\h_n\|  - 2j(2n-2j-1).
    \end{align*}
This quantity is minimized when $j=\lfloor n/2\rfloor$, with a minimum value of $\|\h_n\|-n(n-1) = n^2-1$.\smallskip
\item For $k=-j$ with $j\geq n$, the product $\f_n^{-j}\h_n$ cancels all of $\h_n$, i.e.~$\f_n^{-j} = \bigl(\f_n^{-j}\h_n\bigr)\cdot \h_n^{-1}$.  It follows that
\begin{multline*}
\qquad\qquad \bigl\|\f_n^{-j}\h_n\bigr\| = j\|\f_n\|-\|\h_n\|  = j(4n) - (n-1)(2n+1) \\ \geq n(4n)-(n-1)(2n+1) = 2n^2+n+1 \geq n^2-1.\qquad
\end{multline*}
\end{itemize}
Thus $\bigl\|\f_n^k\h_n\bigr\|\geq n^2-1$ in all three cases, so the result follows.
\end{proof}

\begin{remark}\label{rem:ActualValues}
Using any of the known length formulas for $F$ \cite{BeBr,Guba,For} together with the analysis of centralizers in the proof of Theorem~\ref{thm:MainLowerBound}, it is possible to show that in fact $\ell(f_n)=\ell(g_n)=8n-5$ and $\cd(f_n,g_n)=\bigl\lceil 2n^2 - \frac{5}{2}n + 4\bigr\rceil$ for all $n\geq 3$, with $f_n^{-\lfloor n/2\rfloor}h_n$ being the unique minimum-length conjugator for $n\geq 4$. It follows that the conjugator length function for Thompson's group~$F$ satisfies
\[
\mathrm{CLF}(n) \geq 2\left\lfloor\frac{n+10}{16}\right\rfloor^2 - \frac{5}{2}\left\lfloor\frac{n+10}{16}\right\rfloor+4 \geq \frac{(n-15)^2+412}{128}
\]
for all $n\geq 38$. 
\end{remark}

This quadratic lower bound can also be made to work for $T$ and~$V$.  This depends on the following lemma.

\begin{lemma}\label{lem:SameCentralizer}
If $f\in F$ and the reduced annular strand diagram for $f$ is connected, then the centralizer of $f$ in $V$ is the same as the centralizer of $f$ in~$F$.
\end{lemma}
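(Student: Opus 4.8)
The inclusion $C_F(f)\subseteq C_V(f)$ is automatic since $F\le V$, so the whole content is to show that any $w\in V$ with $wf=fw$ already lies in $F$. I would use throughout that $F$ is precisely the order-preserving subgroup of $V$, i.e.\ the elements whose reduced strand diagrams have no crossings; it therefore suffices to prove that every such $w$ is order-preserving.

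The plan is to specialize the universal-cover construction from the proof of Lemma~\ref{lem:SameClosure} to the case $\g=\f$. Writing $\hat{\f}$ for the connected reduced closure of $f$ and $\f^\infty=\bigcup_{k\in\mathbb{Z}}\f_k$ for its lift to the universal cover of the annulus, the element $f$ acts on $\f^\infty$ as the deck transformation $\delta$ carrying $\f_k$ to $\f_{k+1}$. The first step is to check that $wf=fw$ lifts to a strand-diagram automorphism $\tilde w$ of $\f^\infty$ commuting with $\delta$, with $w$ read off from the diagram lying between $\f_0$ and $\tilde w(\f_0)$ exactly as in Figure~\ref{fig:UniversalCover}; here one invokes the Belk--Matucci description of conjugacy from~\cite{BeMa}, now allowing crossings in the annular diagram. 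Because $f$ is a $(1,1)$-strand diagram, consecutive copies $\f_k$ and $\f_{k+1}$ are joined along a single strand, and connectivity of $\hat{\f}$ makes $\f^\infty$ a single line whose only bridges (cut-edges) are these single-strand junctions, linearly ordered by $k$. As a graph automorphism $\tilde w$ must permute bridges, and orientation of the strands together with the linear order forces $\tilde w$ to send the level-$k$ bridge to the level-$(k+m)$ bridge for a fixed $m\in\mathbb{Z}$; replacing $w$ by $f^{-m}w$ we may assume $\tilde w$ fixes every bridge, so that $\tilde w$ restricts to an automorphism of each copy $\f_k$ fixing its source and sink.

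The main obstacle is the last step: showing that such a $\tilde w$ cannot permute the strands \emph{inside} a fundamental copy, i.e.\ that a single connected reduced piece admits no nontrivial crossing automorphism fixing its source and sink. This is exactly where connectivity is indispensable, and it is indispensable in two ways. First, for a disconnected closure $\hat{\f}=\hat{\f}_1\oplus\cdots\oplus\hat{\f}_r$ one can interchange two identical components, and the resulting $w$ is a genuine element of $C_V(f)\setminus F$, so the statement simply fails without the hypothesis; connectedness of $\hat{\f}$ is what forbids such component swaps. Second, to rule out an internal crossing I would argue dynamically, viewing $f$ as an order-preserving homeomorphism of $[0,1]$: near each end of a component of the support the local dynamics of $f$ are a pure contraction or expansion, a finite-state ($V$) map commuting with such local dynamics is forced to be order-preserving there, and connectivity---conjugacy-indecomposability of $f$ in the sense of~\cite{BeMa,GuSa}---is precisely what allows one to propagate order-preservation across the entire support. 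Equivalently, any crossing automorphism of the connected piece would exhibit a nontrivial self-similar symmetry splitting $\hat{\f}$, contradicting connectedness. Once $\tilde w$, and hence $w$, is shown to be order-preserving, we conclude $w\in F$ and therefore $w\in C_F(f)$, giving $C_V(f)=C_F(f)$.
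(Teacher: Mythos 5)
You correctly isolate the content of the lemma (showing $C_V(f)\subseteq F$), and your closing sentences gesture at the right mechanism, but as written the argument has two genuine gaps. The structural one: your whole reduction rests on the claim that $w\in V$ with $wf=fw$ lifts to a graph automorphism $\tilde w$ of the planar universal cover $\f^\infty$ commuting with the deck transformation. The universal-cover picture of Lemma~\ref{lem:SameClosure} is a statement about $F$-strand diagrams (equivalently, diagram groups), where a conjugator is planar and literally sits between consecutive copies of~$\f$. A centralizing element of $V$ corresponds to a $(1,1)$-strand diagram \emph{with crossings} $\mathfrak{w}$ satisfying $\f\mathfrak{w}=\mathfrak{w}\f$; such a diagram does not embed as a subcomplex of the planar cover, and \cite{BeMa} does not supply an ``automorphism of $\f^\infty$'' interpretation of it, so the passage from $w$ to $\tilde w$ is not established. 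Relatedly, the assertion that the single-strand junctions are the \emph{only} cut-edges of $\f^\infty$ is unjustified (a connected reduced diagram can perfectly well have internal cut-edges), so even granting the lift, the reduction to ``$\tilde w$ preserves each copy $\f_k$ and fixes its source and sink'' does not follow as stated.

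The second gap is that the step you yourself flag as the main obstacle --- that a connected piece admits no nontrivial crossing symmetry --- is exactly the substance of the lemma, and your treatment of it is an assertion rather than an argument (``a finite-state map commuting with such local dynamics is forced to be order-preserving\dots connectivity is precisely what allows one to propagate''). The paper proves the lemma by making precisely this dynamical idea rigorous, with no combinatorial superstructure: connectedness of the reduced annular diagram implies, by \cite[Theorem~5.2]{BeMa}, that $f$ has no dyadic fixed points in $(0,1)$; if $0=p_0<\cdots<p_m=1$ are the fixed points of $f$, then every full $f$-orbit of a non-fixed point accumulates exactly at a pair $\{p_{i-1},p_i\}$, and since $k$ maps full $f$-orbits to full $f$-orbits it must fix each $p_i$ and carry each interval $[p_{i-1},p_i]$ to itself; finally, given $p_{i-1}<x<y<p_i$, one chooses $n$ so that $f^n(x),f^n(y)$ lie in an interval $(p_i-\epsilon,p_i)$ on which $k$ is linear (hence increasing), whence $k(x)=f^{-n}kf^n(x)<f^{-n}kf^n(y)=k(y)$. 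If you drop the universal-cover scaffolding and carry out this orbit and fixed-point argument in detail, you recover the paper's proof; as it stands, the proposal defers the essential content to an unproved claim.
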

\begin{proof}
Let $k\in V$ so that $kf=fk$.  
Since the reduced annular strand diagram for $f$ is connected, we know that $f$ has no dyadic fixed points in the interval $(0,1)$ (see~\cite[Theorem~5.2]{BeMa}).  Let $0=p_0 <p_1 < \cdots < p_m=1$ be the fixed points of~$f$, which must be permuted by~$k$.  However, observe that for each $x\in[0,1]\setminus \{p_0,\ldots,p_m\}$, the full $f$-orbit $\{f^n(x)\}_{n\in\mathbb{Z}}$ has accumulation points at $p_{i-1}$ and $p_i$ for some~$i$.  Since $k$ maps full $f$-orbits to full $f$-orbits, it follows that $k(p_i)=p_i$ for all~$i$, and indeed $k$ maps each interval $[p_{i-1},p_i]$ to itself.

All that remains is to show that $k$ is order-preserving on each interval $(p_{i-1},p_i)$, and hence $k\in F$.  Let $\epsilon>0$ so that $k$ is linear on $(p_i-\epsilon,p_i]$, and let $p_{i-1}<x<y<p_i$.  Then there exists an $n\in\mathbb{Z}$ so that $p_i-\epsilon < f^n(x) < f^n(y) < p_i$.  Since $k$ is linear on $(p_i-\epsilon,p_i)$, it follows that $kf^n(x)<kf^n(y)$, so
\[
k(x) = f^{-n}kf^n(x) < f^{-n}kf^n(y) = k(y).\qedhere
\]
\end{proof}

\begin{theorem}
\label{lower}
In Thompson's group $T$ or $V$, there exists a constant $C>0$ so that
\[
\CLF(n) \geq Cn^2
\]
for all $n\in\mathbb{N}$.
\end{theorem}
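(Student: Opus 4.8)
The plan is to reuse the very same conjugate pair $(f_n,g_n)$ from Theorem~\ref{thm:MainLowerBound}, now regarded as elements of $T$ and of $V$ via the inclusions $F\le T\le V$. Since one may choose finite generating sets for $T$ and $V$ that contain the generators $x_0,x_1$ of $F$, the word length of any element of $F$ can only decrease when passing to $T$ or $V$; in particular $\ellT(f_n),\ellT(g_n),\ellV(f_n),\ellV(g_n)\le 8n-5$, so both pairs still have total length $O(n)$. The real content is therefore to show that enlarging the ambient group does not create short conjugators, and then to convert the norm estimate of Theorem~\ref{thm:MainLowerBound} into a length estimate valid in $T$ and $V$.

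The first step controls the set of conjugators. In the proof of Theorem~\ref{thm:MainLowerBound} the reduced annular strand diagram of $f_n$ is connected, so Lemma~\ref{lem:SameCentralizer} gives that the centralizer of $f_n$ in $V$ equals its centralizer in $F$, namely $\langle f_n\rangle$; squeezing the inclusions $C_F(f_n)\subseteq C_T(f_n)\subseteq C_V(f_n)$ then shows the centralizer in $T$ is also $\langle f_n\rangle$. Consequently, since $h_n\in F$ already conjugates $f_n$ to $g_n$, the full set of conjugators from $f_n$ to $g_n$ in $T$ (and in $V$) is exactly the coset $\langle f_n\rangle\, h_n=\{f_n^k h_n: k\in\mathbb{Z}\}$, the identical family of elements of $F$ analyzed in Section~3. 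No new, shorter conjugators appear.

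The second step is a length lower bound that, unlike the upper bound discussed in Remark~\ref{rem:NotV}, survives in $V$. Because norm is subadditive under the group product and every element of a fixed finite generating set has norm at most some constant $c_0$, writing an element as a product of $m$ generators gives $\|f\|\le c_0 m$; hence $\ellV(f)\ge \|f\|/c_0$ for every $f\in V$, and similarly $\ellT(f)\ge\|f\|/c_0'$ for a constant $c_0'$. (The permutation phenomenon of Remark~\ref{rem:NotV} obstructs only the reverse inequality, so it does not interfere here.) Applying this to the conjugators and invoking the uniform bound $\|f_n^k h_n\|\ge n(n-5)$ established inside the proof of Theorem~\ref{thm:MainLowerBound}, we obtain $\ellV(f_n^k h_n)\ge n(n-5)/c_0$ for every $k$, and therefore $\cd(f_n,g_n)\ge n(n-5)/c_0$ in $V$, with the analogous bound in $T$.

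Finally, combining $\ellV(f_n)+\ellV(g_n)\le 2(8n-5)$ with $\cd(f_n,g_n)=\Omega(n^2)$, and using that $\CLF$ is non-decreasing by definition, yields $\CLF(m)\ge Cm^2$ along the linearly growing sequence of total lengths, hence for all sufficiently large $m$; shrinking $C$ covers the remaining small values. I expect the genuine content to be concentrated in the first step: the delicate point is precisely that the conjugating set does not grow when we pass from $F$ to the larger groups, which is exactly what Lemma~\ref{lem:SameCentralizer} secures. Once the conjugators are pinned down to lie in $F$ with the known form $f_n^kh_n$, the remainder is the robust norm-to-length estimate, which is immune to the failure of the analogue of Proposition~\ref{prop:NormAndLength} for $V$.
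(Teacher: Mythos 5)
Your proposal is correct and follows essentially the same route as the paper: the same pair $(f_n,g_n)$, Lemma~\ref{lem:SameCentralizer} to pin the conjugators in $T$ and $V$ down to the coset $\{f_n^k h_n\}\subseteq F$, and the norm bound from Theorem~\ref{thm:MainLowerBound} converted to a length bound via the linear comparison between norm and word length. The paper phrases the last step as the quasi-isometry of the embeddings $F\hookrightarrow T,V$, whereas you use the one-sided inequality $\ellV(f)\ge\|f\|/c_0$ directly, but this is the same idea.
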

\begin{proof}
Recall that elements of Thompson's group $V$ can also be represented by strand diagrams~(see~\cite{BeMa}).  If we fix a finite generating set for $V$, the length $\ell_{\scriptscriptstyle V}(f)$ and norm $\|f\|$ of an element $f\in V$ are related by the formula $\|f\| \leq m\,\ell_{\scriptscriptstyle V}(f)$, where $m$ is the maximum norm of any generator for $V$.  If $f\in F$, it follows from Proposition~\ref{prop:NormAndLength} that
\[
\frac{\ell_{\scriptscriptstyle F}(f)}{2m} \leq \frac{\|f\|}{m} \leq \ell_{\scriptscriptstyle V}(f) \leq \ell_{\scriptscriptstyle F}(f).
\]
That is, the embedding of $F$ into $V$ is quasi-isometric.  A similar argument shows that the embedding of $F$ into $T$ is quasi-isometric.

Now, by Lemma~\ref{lem:SameCentralizer} the centralizers of the elements $f_n$ are the same in $T$ or $V$ as they are in~$F$.  It follows that the conjugators from $f_n$ to $g_n$ in $T$ or $V$ are the same as they are in~$F$, and since the word lengths of the conjugators are the same up to a linear factor we obtain a quadratic lower bound on the conjugator length function.
\end{proof}

\bigskip
\newcommand{\arxiv}[1]{\href{https://arxiv.org/abs/#1}{arXiv:#1}}
\newcommand{\doi}[1]{\href{https://doi.org/#1}{\blue{doi:#1}}}
\bibliographystyle{plain}

\begin{thebibliography}{10}

\smallskip
\bibitem{Behrstock-Drutu}
J.~Behrstock and C.~Druţu, Divergence, thick groups, and short conjugators, \textit{Illinois Journal of Mathematics} \textbf{58.4} (2014): 939--980. \doi{10.1215/ijm/1446819294}.

\smallskip
\bibitem{BeBr}
J.~Belk and K.~Brown, Forest diagrams for elements of Thompson's group~$F$. \textit{International Journal of Algebra and Computation} \textbf{15.05n06} (2005): 815--850. \doi{10.1142/S021819670500261X}.

\smallskip
\bibitem{BHMM1}
J.~Belk, N.~Hossain, F.~Matucci, and R.~McGrail, Deciding conjugacy in Thompson's group~$F$ in linear time. In \textit{2013 15th International Symposium on Symbolic and Numeric Algorithms for Scientific Computing}, Timisoara (2013): 89--96, \doi{10.1109/SYNASC.2013.19}.

\smallskip
\bibitem{BHMM2}
J.~Belk, N.~Hossain, F.~Matucci, and R.~McGrail, Implementation of a solution to the conjugacy problem in Thompson's group~$F$. \textit{ACM Communications in Computer Algebra} \textbf{47.3/4} (2014): 120--121. \doi{10.1145/2576802.2576823}.

\smallskip
\bibitem{BeMa}
J.~Belk and F.~Matucci, Conjugacy and dynamics in Thompson's groups. \textit{Geometriae Dedicata} \textbf{169.1} (2014): 239--261. \doi{10.1007/s10711-013-9853-2}.

\smallskip
\bibitem{Birget}
J.C.~Birget, The groups of Richard Thompson and complexity. \textit{International Journal of Algebra and Computation} \textbf{14.05n06} (2004): 569--626. \doi{10.1142/S0218196704001980}.

\smallskip
\bibitem{Bridson-Haefliger}
M.~Bridson and A.~Haefliger, \textit{Metric spaces of non-positive curvature}, Grundlehren der Mathematischen Wissenschaften [Fundamental Principles of Mathematical Sciences], vol.~\textbf{319}, Springer-Verlag, Berlin, 1999. \doi{10.1007/978-3-662-12494-9}.

\smallskip
\bibitem{BRS}
M.~Bridson, T.~Riley, and A.~Sale, Conjugator length in finitely presented groups. Preprint in preparation.

\smallskip
\bibitem{Bro}
K.~Brown, Finiteness properties of groups. \textit{Journal of Pure and Applied Algebra} \textbf{44.1-3} (1987): 45--75. \doi{10.1016/0022-4049(87)90015-6}.

\smallskip
\bibitem{BCST}
J.~Burillo, S.~Cleary, M.~Stein, and J.~Taback, Combinatorial and metric properties of Thompson’s group~$T$. \textit{Transactions of the American Mathematical Society} \textbf{361.2} (2009): 631--652. \doi{10.1090/S0002-9947-08-04381-X}

\smallskip
\bibitem{CFP}
J.~Cannon, W.~Floyd, and W.~Parry, Introductory notes on Richard Thompson's groups. \textit{Enseignement Math\'{e}matique} \textbf{42} (1996): 215--256.

\smallskip
\bibitem{For}
S.~Fordham, Minimal length elements of Thompson's group~$F$. \textit{Geometriae Dedicata} \textbf{99.1} (2003): 179--220. \doi{10.1023/A:1024971818319}.

\smallskip
\bibitem{Guba}
V.~Guba, On the properties of the Cayley graph of Richard Thompson's group~$F$. \textit{International Journal of Algebra and Computation} \textbf{14.05n06} (2004): 677--702. \doi{10.1142/S021819670400192X}.

\smallskip
\bibitem{GuSa}
V.~Guba and M.~Sapir, \textit{Diagram Groups}. Memoirs of the American Mathematical Society \textbf{130}, no.~620, American Mathematical Society, 1997. \doi{10.1090/memo/0620}.


\smallskip
\bibitem{Lysenok}
I.~Lys\"enok, On some algorithmic properties of hyperbolic groups. \textit{Mathematics of the USSR-Izvestiya} \textbf{35.1} (1990): 145. \doi{10.1070/IM1990v035n01ABEH000693}.

\smallskip
\bibitem{Masur-Minsky}
H.~Masur and Y.~Minsky, Geometry of the complex of curves. II. Hierarchical structure, \textit{Geometric and Functional Analysis} \textbf{10.4} (2000): 902--974. \doi{10.1007/PL00001643}

\smallskip
\bibitem{Sale}
A.~Sale, The length of conjugators in solvable groups and lattices of semisimple Lie groups. Doctoral dissertation, Oxford University, UK, 2012.

\smallskip
\bibitem{Sale3}
A.~Sale, The geometry of the conjugacy problem in wreath products and free solvable groups, \textit{Journal of Group Theory} \textbf{18.4} (2015): 587--621. \doi{10.1515/jgth-2015-0009}.

\smallskip
\bibitem{Sale1}
A.~Sale, Conjugacy length in group extensions. \textit{Communications in Algebra} \textbf{44.2} (2016): 873--897. \doi{10.1080/00927872.2014.990021}.

\smallskip
\bibitem{Sale2}
A.~Sale, Geometry of the conjugacy problem in lamplighter groups. Algebra and computer science, 171--183,
\textit{Contemporary Mathematics}, \textbf{677} (2016). \doi{10.1090/conm/677}.

\smallskip
\bibitem{Tao}
J.~Tao, Linearly bounded conjugator property for mapping class groups, \textit{Geometric and Functional Analysis} \textbf{23.1} (2013): 415--466. \doi{10.1007/s00039-012-0206-3}.

\end{thebibliography}

\end{document}